\date{August 28, 2020}
\DeclareMathOperator{\End}{End}     
\DeclareMathOperator{\GCD}{GCD}     
\DeclareMathOperator{\Hom}{Hom}     
\DeclareMathOperator{\rk}{rk}       
\DeclareMathOperator{\Sym}{Sym}     
\DeclareMathOperator{\tr}{tr}       
\DeclareMathOperator{\divv}{div}    
\newcommand{\al}{\alpha}           
\newcommand{\la}{\lambda}           
\newcommand{\Om}{\varOmega}         
\newcommand{\Sg}{\Sigma}            
\newcommand{\sg}{\sigma}            
\newcommand{\bC}{\mathbb{C}}        
\newcommand{\bN}{\mathbb{N}}        
\newcommand{\bQ}{\mathbb{Q}}        
\newcommand{\bR}{\mathbb{R}}        
\newcommand{\bS}{\mathbb{S}}        
\newcommand{\bZ}{\mathbb{Z}}        
\newcommand{\bP}{\mathbb{P}}        
\newcommand{\sA}{\mathcal{A}}       
\newcommand{\sE}{\mathcal{E}}       
\newcommand{\gG}{\mathcal{G}}       
\newcommand{\sJ}{\mathcal{J}}	    
\newcommand{\sM}{\mathcal{M}}       
\newcommand{\sN}{\mathcal{N}}       
\newcommand{\sO}{\mathcal{O}}       
\newcommand{\td}{\tilde{d}}	    
\newcommand{\tE}{\tilde{E}}	    
\newcommand{\dd}{\mathbf{d}}        
\newcommand{\rr}{\mathbf{r}}        
\renewcommand{\geq}{\geqslant}        
\newcommand{\hookto}{\hookrightarrow} 
\renewcommand{\leq}{\leqslant}        
\newcommand{\ox}{\otimes}             
\newcommand{\sse}{\subseteq}		
\renewcommand{\:}{\colon}             
\newcommand{\word}[1]{\quad\text{#1}\quad} 
\def\longto^#1{\xrightarrow{\;#1\;}} 
\theoremstyle{plain}
\newtheorem{Th}{Theorem}[section]   
\newtheorem*{nonum-Th}{Theorem}     
\newtheorem{Prop}[Th]{Proposition}  
\newtheorem{Lem}[Th]{Lemma}         
\newtheorem{Cor}[Th]{Corollary}     
\newtheorem*{nonum-Cor}{Corollary}  
\theoremstyle{definition}
\newtheorem{Def}[Th]{Definition}    
\theoremstyle{remark}
\newtheorem{Rmk}[Th]{Remark}        
\numberwithin{equation}{section}
\newcommand*{\QEDA}{\hfill\ensuremath{\boxminus}} 
\DeclareRobustCommand{\QEDA}{\ifmmode
  \else \leavevmode\unskip\penalty9999 \hbox{}\nobreak\hfill \fi
  \quad\hbox{\qedasymbol}}
\newcommand{\qedasymbol}{$\boxminus$} 
\newcommand{\hideqed}{\renewcommand{\qed}{}} 
\renewcommand{\section}{\@startsection{section}{1}{\z@}%
                        {-3.5ex \@plus -1ex \@minus -.2ex}%
                        {2.3ex \@plus.2ex}%
                        {\normalfont\large\bfseries}}
\renewcommand{\subsection}{\@startsection{subsection}{2}{\z@}%
                        {-3.25ex \@plus -1ex \@minus -.2ex}%
                        {1.5ex \@plus .2ex}%
                        {\normalfont\normalsize\bfseries}}
\renewcommand{\subsubsection}{\@startsection{subsubsection}{3}{\z@}%
                        {-3.25ex \@plus -1ex \@minus -.2ex}%
                        {1.5ex \@plus .2ex}%
                        {\normalfont\normalsize\itshape}}
\renewcommand{\@dotsep}{200} 
\begin{document}
 
\thispagestyle{empty}

\begin{center}
\Large
\textsc{Variations of Hodge Structures\\ of Rank Three $k$-Higgs Bundles\\ and Moduli Spaces of Holomorphic Triples}\\

\bigskip
\normalsize
  August 28, 2020\\
  
\bigskip
\emph{Ronald A. Z\'u\~niga-Rojas}\footnote{ \scriptsize
  Supported by Universidad de Costa Rica through Escuela de Matem\'atica, specifically through CIMM 
  (Centro de Investigaciones Matem\'aticas y Metamatem\'aticas), Project {\tt 820-B8-224}. This work is partly based on the Ph.D. Project~\cite{z-r} 
  called {\em ``Homotopy Groups of the Moduli Space of Higgs Bundles''}, supported by FEDER through Programa Operacional Factores de 
  Competitividade-COMPETE, and also supported by FCT (Funda\c{c}\~ao para a Ci\^encia e a Tecnologia) through the projects {\tt PTDC/MAT-GEO/0675/2012} 
  and {\tt PEst-C/MAT/UI0144/2013} with grant reference {\tt SFRH/BD/51174/2010}.}\\[6pt] 
\small Centro de Investigaciones Matem\'aticas y Metamatem\'aticas CIMM\\
\small Escuela de Matem\'atica, Universidad de Costa Rica,\\ 
\small San Jos\'e 11501, Costa Rica\\
\small e-mail: \texttt{ronald.zunigarojas@ucr.ac.cr}\\
\small ORCID: \href{https://orcid.org/0000-0003-3402-2526}{\texttt{0000-0003-3402-2526}}
\end{center}

\begin{abstract}

\noindent There is an isomorphism between the moduli spaces of $\sg$-stable holomorphic triples and some of the critical submanifolds of the moduli space of 
$k$-Higgs bundles of rank three, whose elements $(E,\varphi^k)$ correspond to variations of Hodge structure, VHS.
There are special embeddings on the moduli spaces of $k$-Higgs bundles of rank three. The main objective here is to study the cohomology of the 
critical submanifolds of such moduli spaces, extending those embeddings to moduli spaces of holomorphic triples.

\end{abstract}

\begin{flushleft}
\small
\emph{Keywords}:
Higgs bundles, holomorphic triples, moduli spaces, variations of Hodge structure.

\emph{MSC classes}:  Primary \texttt{14F45}; Secondaries \texttt{14D07}, \texttt{14H60}.
\end{flushleft}

\section*{Introduction} 
\addcontentsline{toc}{section}{Introduction}
\label{sec:0} 

Consider a compact connected Riemann surface $X$ of genus $g \geq 2$. Algebraically, $X$ is a complete irreducible non-singular curve over $\bC$. 
Let $\sN = \sN(r,d)$ be the moduli space of polystable vector bundles of rank $r$ and degree $d$ over $X$. In this paper, we consider the co-prime 
condition $\GCD(r,d) = 1$, which ensures that polystable implies stable. This space has been widely worked by Atiyah~\&~Bott~\cite{atbo}, 
Desale~\&~Ramanan~\cite{dera}, Earl~\&~Kirwan~\cite{eaki}, among other authors. Here, we consider it as the corresponding minimal critical submanifold 
of $\sM(r,d)$, the moduli space of polystable Higgs bundles. A Higgs bundle over $X$ is a pair $(E,\varphi)$ where $E\to X$ is a holomorphic vector 
bundle and $\varphi\: E \to E\otimes K$ is an endomorphism twisted by the cotangent bundle $K = T^{*}X$. Fixing rank $r$ and degree $d$ of the 
underlying vector bundle $E$, the isomorphism classes of polystable Higgs bundles are parametrized by a quasiprojective variety: the moduli space of 
polystable Higgs bundles $\sM^{ps}(r,d)$. Again, since $\GCD(r,d) = 1$, polystability implies stability and then, the space 
$\sM^{ps}(r,d) = \sM^{ss}(r,d) = \sM^{s}(r,d)$ becomes a smooth projective variety. These spaces were first worked by Hitchin~\cite{hit2} and 
Simpson~\cite{sim2}. Since then, they have been around for more than thirty years, and have been studied extensively by a number of authors: \emph{e.g.}
\cite{decataldo-mark-hausel-migliorini:2012,
hausel:2013,
hausel-letellier-rodriguez-villegas:2011,
hausel-rodriguez-villegas:2008,
hath1,hath2,
z-r0}.

\noindent Higgs bundles are an interesting topic of research because they have links with many other areas of mahtematics such as integrable systems, 
mirror symmetry,  Langlands programme, Hodge theory, among others. We are interested on their link to Hodge theory. The work of 
Simpson~\cite{sim1, sim2}, Hausel~\cite{hau}, and Hausel \& Thaddeus~\cite{hath1, hath2} shall be particularly useful for our purposes.

\noindent There is a Morse function $f\: \sM^k(3,d) \to \bR$ defined by
\[
f(E,\varphi) = \frac{1}{2\pi}\lVert \varphi \rVert^2_{L_2} 
= \frac{i}{2\pi} \int_X \tr(\varphi \varphi^*) 
\]
applied to the moduli spaces of stable $k$-Higgs bundles $\sM^k(r,d)$. We study the stabilization of the cohomology groups of 
the critical submanifolds from this Morse function $f$, for the case of rank $r = 3$. The co-prime condition $(3,d) = 1$ implies 
that the moduli space $\sM^k(3,d)$ is smooth. A $k$-Higgs bundle or Higgs bundle with poles of order $k$, $(E,\varphi^k)$, is 
a Higgs bundle where the morphism $\varphi^k$ is twisted by $L_p$ $k$-times, where $p\in X$ is an arbitrary fixed point and 
$L_p = \sO_{X}(p)$ is its associated line bundle (local structure sheaf): 
\[
 \varphi^k\: E \to E\ox K\ox L_p^{\ox k} = E \ox K(k\cdot p). 
\]
According to Simpson~\cite{sim1} the critical points of $f$, are
{\em variations of the Hodge structure} (VHS), a decomposition of the form:
\begin{equation}
E = \bigoplus_{j=1}^n E_j \word{such that} 
\varphi \: E_j \to E_{j+1} \ox K \textmd{ for } 1 \leq j \leq n - 1.
\end{equation} 
for general rank $r$. In our case, for $\sM^{k}(3,d)$, there are three kind of variations of Hodge structure:

\begin{enumerate}[i.]
 \item 
 $(1,2)$-VHS:
\[
 \Big(
 E_1\oplus E_2,
 \left(
 \begin{array}{c c}
  0 & 0\\
  \phi & 0
 \end{array}
 \right)
 \Big)\in
 F_{d_1}^{k}\sse
 \sM^{k}(3,d).
\]
 \item 
 $(2,1)$-VHS:
\[
 \Big(
 E_2\oplus E_1,
 \left(
 \begin{array}{c c}
  0 & 0\\
  \phi & 0
 \end{array}
 \right)
 \Big)\in
 F_{d_2}^{k}\sse
 \sM^{k}(3,d).
\]
 \item 
 $(1,1,1)$-VHS:
\[
\Big(
L_1\oplus L_2\oplus L_3,
\left(
 \begin{array}{c c c}
     0     &     0     & 0\\ 
 \phi_{21} &     0     & 0\\
     0     & \phi_{32} & 0
 \end{array}
\right)
\Big)\in 
F_{m_1 m_2}^{k}\sse
\sM^{k}(3,d),
\]
\end{enumerate}
Here, $F_{d_1}^{k},\ F_{d_2}^{k}$ and $F_{m_1 m_2}^{k}$ denote the respective critical submanifolds of the moduli space $\sM^{k}(3,d)$. The first 
two, $F_{d_1}^{k}$ and $F_{d_2}^{k}$, are close related to the space $\sN_{\sg}(r_1,r_2,d_1,d_2)$, the moduli space of $\sg$-stable holomorphic 
triples of type $(\rr,\dd) = (r_1,r_2,d_1,d_2)$.

\noindent A holomorphic triple $T = (E_1,E_2,\phi)$ on $X$ consists of a pair of holomorphic vector bundles $E_1\to X$ and $E_2\to X$, 
of ranks $r_1, r_2$ and degrees $d_1,d_2$ respectively, and a holomorphic map $\phi\: E_2\to E_1$. The stability for triples depends on a 
parameter $\sg \in \bR$,  which gives a collection of moduli spaces $\sN_{\sg}(r_1,r_2,d_1,d_2)$ widely worked by several authors: \emph{e.g.}
\cite{
brgp,
bgg1,
bgg2,
ggm,
mos,
mov}.
The range of $\sg$ is an interval $[\sg_{m}, \sg_{M}] \sse \bR$ split by a finite number of critical values $\sg_c$. The reader may see 
Bradlow, Garc\'ia-Prada, Gothen~\cite{bgg1}, Mu\~noz, Oliveira, S\'anchez~\cite{mos}, or Mu\~noz, Ortega, V\'asquez-Gallo~\cite{mov} for the 
interval details. 

\noindent This paper works with a very particular framework. We study holomorphic triples on $X$ of the form $T = (\tE_1,\tE_2,\phi)$ with type 
$(2,1,\td_1,\td_2)$, where ranks $r_1 = 2,\ r_2 = 1$ and degrees $\td_1, \td_2$ are in terms of $(1,2)$-VHS described before: 
\[
 \Big(
 E_1\oplus E_2,
 \left(
 \begin{array}{c c}
  0 & 0\\
  \phi & 0
 \end{array}
 \right)
 \Big)\in
 F_{d_1}^{k}\sse
 \sM^{k}(3,d),
\]
where $\tE_1 = E_2 \ox K(kp)$, $\tE_2 = E_1$, $\phi\: E_1 \to E_2 \ox K(kp)$, and so the degrees become 
$\td_1 = \deg(\tE_1) = d_2 + 2(2g - 2 + k)$, $\td_2 = d_1$.

\noindent We study as well the holomorphic triples $T = (\tE_1,\tE_2,\phi)$ with type $(1,2,\td_1,\td_2)$, related to $(2,1)$-VHS of the form 
\[
 \Big(
 E_2\oplus E_1,
 \left(
 \begin{array}{c c}
  0 & 0\\
  \phi & 0
 \end{array}
 \right)
 \Big)\in
 F_{d_2}^{k}\sse
 \sM^{k}(3,d),
\]
where, in this case $\tE_1 = E_1 \ox K(kp)$, $\tE_2 = E_2$, $\phi\: E_2 \to E_1 \ox K(kp)$, and the degrees become 
$\td_1 = \deg(\tE_1) = d_1 + 2g - 2 + k$, $\td_2 = d_2$.

\noindent Finally, we also study $(1,1,1)$-VHS
\[
\Big(
L_1\oplus L_2\oplus L_3,
\left(
 \begin{array}{c c c}
     0     &     0     & 0\\ 
 \phi_{21} &     0     & 0\\
     0     & \phi_{32} & 0
 \end{array}
\right)
\Big)\in 
F_{m_1 m_2}^{k}\sse
\sM^{k}(3,d),
\]
and those are related to symmetric products of the form 
\[
\Sym^{m_1}(X)\times \Sym^{m_2}(X)\times \sJ^{d_3}(X)
\]
where $J^{d_3}(X)$ is the Jacobian of $X$, the moduli space of stable line bundles of degree $d_3$, and $m_1, m_2$ will be described below as the corresponding degrees of auxiliar bundles.

\noindent Our estimates are based on embeddings $\sM^k(3,d) \hookrightarrow \sM^{k+1}(3,d)$ defined by
\[
i_k\: 
\big[(E,\varphi^k)\big] 
\longmapsto 
\big[(E,\varphi^k \ox s_p)\big]
\]
where $0 \neq s_p \in H^0(X, L_p)$ is a nonzero fixed section of~$L_p = \sO_X(p)$. 

\noindent The paper is organized as follows: 
in section \ref{sec:1} we recall 
some basic facts about holomorphic 
triples, Higgs bundles and $k$-Higgs bundles; 
in section \ref{sec:2}, we present 
the effect of the embeddings on 
$\sg$-stable triples; 
in subsection \ref{ssec:2.1}, we show that the embeddings preserve $\sg$-stability,
in subsection \ref{ssec:2.2}, we discuss the effect of the embeddings 
considering the flip loci, and present an original result, 
the so-called \emph{``Roof Theorem''}: 

\begin{Th}[Theorem \ref{RoofTheorem}]
 There exists an embedding
 \[
 \tilde{i_k}: \tilde{\sN}_{\sg_c(k)} 
 \hookrightarrow 
 \tilde{\sN}_{\sg_c(k+1)}
 \]
 such that the following diagram commutes: 
 \begin{align*}
 \begin{xy}
(0,-20)*+{\sN_{\sg_c^{-}(k+1)}}="a";
(20,0)*+{\tilde{\sN}_{\sg_c(k+1)}}="b";
(10,-50)*+{\tilde{\sN}_{\sg_c(k)}}="c";
(40,-20)*+{\sN_{\sg_c^{+}(k+1)}}="d";
(-10,-70)*+{\sN_{\sg_c^{-}(k)}}="e";
(30,-70)*+{\sN_{\sg_c^{+}(k)}}="f";
{\ar@{-->}^(.45){\exists \tilde{i_k}} "c";"b" **\dir{--}};
{\ar^(.45){} "b";"a" **\dir{-}};
{\ar^(.45){i_k} "e";"a" **\dir{-}};
{\ar^(.45){} "c";"e" **\dir{-}};
{\ar_(.45){i_k} "f";"d" **\dir{-}};
{\ar^(.45){} "c";"f" **\dir{-}};
{\ar^(.45){} "b";"d" **\dir{-}};
 \end{xy}
 \end{align*}
where $\tilde{\sN}_{\sg_c(k)}$ is the blow-up of 
$
\sN_{\sg_c^{-}(k)} = 
\sN_{\sg_c^{-}(k)}(2,1,\td_1,\td_2)
$ 
along the flip locus $S_{\sg_c^{-}(k)}$ and, at the same time, represents the blow-up of
$
\sN_{\sg_c^{+}(k)} = 
\sN_{\sg_c^{+}(k)}(2,1,\td_1,\td_2)
$ 
along the flip locus $S_{\sg_c^{+}(k)}$.
\end{Th}

\noindent Here, $\sg_c(k) \in ]\sg_m,\sg_M[$ is a $\sg$-critical value depending on the parameter $k$, that lies in the interval mentioned above, where 
$\sg_m = \mu_1 - \mu_2 = \td_1/2 - \td_2$ and $\sg_M = 4\sg_m$.

\noindent In section \ref{sec:3} we present the cohomology main results for triples: 
in subsection \ref{ssec:3.1} appear some useful results about the cohomology 
of the symmetric product $\Sym^{k}(X)$. 
In subsection \ref{ssec:3.2} we present
the stabilization of the cohomology (Theorem \ref{CohomologyBlowUp}) for certain indices:

\begin{Th}[Theorem~\ref{CohomologyBlowUp}]
 There is an isomorphism
\[
 \tilde{i_k^*}\: 
 H^{j}(\tilde{\sN}_{\sg_c(k+1)},\mathbb{Z}) 
 \xrightarrow{\quad \cong \quad} 
 H^{j}(\tilde{\sN}_{\sg_c(k)},\mathbb{Z})\quad 
 \forall j \leqslant n(k)
\]
 at the blow-up level, where 
 $
 n(k) = \min \big\{ \td_1 - d_M - \td_2 - 1,\quad 2\big(\td_1 - 2\td_2 - (2g - 2)\big) + 1\big\}
 $.
\end{Th}

\noindent And hence, the cohomology stabilization of the moduli spaces of triples: 
\begin{Cor}[Corollary~\ref{CohomologyCriticalTriples}]
 There is an isomorphism
\[
 i_k^*\: 
 H^{j}(\sN_{\sg_c(k+1)},\mathbb{Z}) 
 \xrightarrow{\quad \cong \quad} 
 H^{j}(\sN_{\sg_c(k)},\mathbb{Z})\quad 
 \forall j \leqslant n(k)
\]
 where 
 $
 n(k)
 $ as above.
\end{Cor}

\noindent In subsection \ref{ssec:3.3} we show the stabilization 
of the $(1,2)$-VHS cohomology using the isomorphisms between them 
and the moduli spaces of triples:

\begin{Cor}[Corollary~\ref{(1,2)-VHS--Cohomology}]
 There is an isomorphism 
\[
 H^{j}(F_{d_1}^{k+1},\mathbb{Z}) 
 \xrightarrow{\quad \cong \quad} 
 H^{j}(F_{d_1}^{k},\mathbb{Z})
\]
 for all
 $
 j \leqslant \sg_H(k) - 2(\mu_1 - \mu) - 1
 $.
\end{Cor}
\noindent Here, $\sg_{H}(k) \in ]\sg_m,\sg_M[$ is a particular $\sg$-critical value depending on the parameter $k$: 
\[
 \sg_{H}(k) = \deg\big(K(k\cdot p)\big) = 2g - 2 + k.
\]

\noindent In subsection \ref{ssec:3.4} we show the analogous dual result for $(2,1)$-VHS:

\begin{Cor}[Corollary~\ref{(2,1)-VHS--Cohomology}]
 For $k$ large enough, there is an isomorphism 
 \[
 H^{j}(F_{d_2}^{k+1},\mathbb{Z}) 
 \xrightarrow{\quad \cong \quad} 
 H^{j}(F_{d_2}^{k},\mathbb{Z})  
 \]
 for all
 $
 j \leqslant \sg_H(k) - 4(\mu_2 - \mu) - 1.
 $
\end{Cor}

\noindent Finally, in subsection \ref{ssec:3.5} we described the cohomology for $(1,1,1)$-VHS and its relationship with the spaces 
$\Sym^{m_1}(X)\times \Sym^{m_2}(X)\times \sJ^{d_3}(X)$:

\begin{Cor}[Corollary~\ref{(1,1,1)-cohomology-iso}]
 There is an isomorphism
 \[
  H^{j}(F_{m_1 m_2}^{\infty},\bZ)
  \xrightarrow{\quad \cong \quad} 
  H^{j}(F_{m_1 m_2}^{k},\bZ)
 \]
 for all $j \leq \min \big(\bar{m}_1 + k, \bar{m}_2 + k\big) - 1$.
\end{Cor}

\section{Preliminary definitions} 
\label{sec:1}

Let $X$ be a compact connected Riemann surface of genus $g \geq 2$ and let $K = T^*X$ be the canonical line bundle of 
$X$. Note that, algebraically, $X$ is also a nonsingular complex projective algebraic curve.

\noindent The $k$-th symmetric product $\Sym^{k}(X)$ is a smooth projective variety of dimension $k\in \bN$, that could be interpretated as 
the moduli space of degree $k$ effective divisors. In other words, $\Sym^{k}(X) = X^{k}/S_{k}$, the symmetric product with quotient topology, 
is the quotient of $X^{k}$ the $k$-times cartesian product by the action of $S_{k}$ the $k$-symmetric group. Obviously $\Sym^{1}(X) = X$.

\begin{Def} 
 For a (smooth or holomorphic) vector bundle $E \to X$, we denote the \emph{rank} of~$E$ by
 $\rk(E) = r$ and the \emph{degree} of $E$ by $\deg(E) = d$. Its 
 \emph{slope} is defined to be
  \begin{equation}
    \mu(E) = \frac{\deg(E)}{\rk(E)} = \frac{d}{r}.
    \label{slope} 
  \end{equation} 
 A vector bundle $E \to X$ is called \emph{semistable}  if $\mu(F) \leq \mu(E)$ for any nonzero $F \subseteq E$.  Similarly, a vector bundle $E \to X$ is called \emph{stable}  if $\mu(F) < \mu(E)$ for any nonzero $F \subsetneq E$. Finally,  $E$ is called \emph{polystable} if it is the direct sum of stable subbundles, all of the same slope.
\end{Def}

\subsection{Holomorphic Triples} 
\label{ssec:1.1}

\begin{Def} 
  A \emph{holomorphic triple} on $X$ is a triple
  $T = (E_1, E_2, \phi)$ consisting of two holomorphic vector bundles
  $E_1 \to X$ and $E_2 \to X$ and a homomorphism $\phi \: E_2 \to E_1$,
  {\em i.e.} an element 
  $\phi \in H^0\big(\Hom(E_2,E_1)\big)$. 
\end{Def}

\begin{Def} 
 A \emph{homomorphism} from a triple $T' = (E'_1,E'_2,\phi')$ to
  another triple $T = (E_1,E_2,\phi)$ is a commutative diagram of the
  form:
  \[
  \begin{xy}
    (0,0)*+{E'_2}="a";
    (20,0)*+{E'_1}="b";
    (0,-20)*+{E_2}="c";
    (20,-20)*+{E_1}="d";
    {\ar@{->}^{\phi'} "a";"b"};
    {\ar@{->} "b";"d"};
    {\ar@{->} "a";"c"};
    {\ar@{->}^{\phi} "c";"d"};
  \end{xy}
  \]
  where the vertical arrows represent holomorphic maps.
\end{Def}

\begin{Def} 
 A triple $T' = (E_1',E_2',\phi')$ is a {\em subtriple} of $T = (E_1,E_2,\phi)$ if
 \begin{enumerate}[i.]
  \item 
  $E_j' \sse E_j$ is a coherent subsheaf for $j=1,2$
  \item 
  $\phi' = \phi|_{{}_{E_2'}}$, {\em i.e.} $\phi'$ is the restriction of $\phi$.
 \end{enumerate}
In other words, we get the commutative diagram
\[
  \begin{xy}
    (0,0)*+{E'_2}="a";
    (20,0)*+{E'_1}="b";
    (0,-20)*+{E_2}="c";
    (20,-20)*+{E_1}="d";
    {\ar@{->}^{\phi'} "a";"b"};
    {\ar@{->} "b";"d"};
    {\ar@{->} "a";"c"};
    {\ar@{->}^{\phi} "c";"d"};
  \end{xy}
\]
where the vertical arrows are injective inclusions this time. In such a case, we denote $T' \sse T$.

If $E_1' = E_2' = 0$ we call the subtriple $T' \sse T$ as {\em the trivial subtriple}.

$T'$ is a {\em non-trivial proper subtriple} if $0 \neq T' \subsetneq T$.
\end{Def}

\begin{Rmk} 
 For stability criteria, it will be enough to consider saturated subsheaves. In our case, since $X$ is a Riemann surface, 
 saturated subsheaves are precisely subbundles.
\end{Rmk}

\begin{Def} 
 A triple $T = (E_1,E_2,\phi)$ is {\em reducible} if there are direct sum decompositions $\displaystyle E_1  = \bigoplus_{i=1}^{n}E_{1i}$,
 $\displaystyle E_2  = \bigoplus_{i=1}^{n}E_{2i}$, and $\displaystyle \phi  = \bigoplus_{i=1}^{n}\phi_{i}$ such that 
 $\phi_{i} \in \Hom(E_{2i},E_{1i})$. In such a case, $T$ has a {\em direct sum decomposition}
 \[
  T = \bigoplus_{i=1}^{n} T_{i} \word{of subtriples} T_{i} = (E_{1i},E_{2i},\phi_{i}).
 \]
 If $T = (E_1,E_2,\phi)$ is not reducible, we say that $T$ is {\em irreducible}.
\end{Def}

\begin{Rmk} 
 We adopt Bradlow~and~Garc\'ia-Prada~\cite{brgp} convention that if $E_{2i} = 0$ or $E_{1i} = 0$ for some $i$, then $\phi_{i}$ is the zero map.
\end{Rmk}

\begin{Def} 
  $\sg$-Stability, $\sg$-Semistability and $\sg$-Polystability:
  \begin{enumerate}[i.]
  \item
  For any $\sg \in \bR$, the $\sg$-degree and the $\sg$-slope of 
  $T = (E_1,E_2,\phi)$ are defined as:
  \[
  \deg_\sg(T) = \deg(E_1) + \deg(E_2) + \sg \cdot \rk(E_2),
  \]
  and 
  \[
  \mu_\sg(T) = \frac{\deg_\sg(T)}{\rk(E_1) + \rk(E_2)}
  = \mu(E_1 \oplus E_2) 
  + \sg\, \frac{\rk(E_2)}{\rk(E_1) + \rk(E_2)} \,
  \]
  respectively.
  \item
  $T$ is then called $\sg$-stable [respectively, $\sg$-semistable] if 
  $\mu_\sg(T') < \mu_\sg(T)$ [respectively, $\mu_\sg(T') \leq \mu_\sg(T)$]
  for any proper subtriple $0 \neq T' \subsetneq T$.
  \item
  A triple is called $\sg$-polystable if it is the direct sum of
  $\sg$-stable triples of the same $\sg$-slope.
  \end{enumerate}
\end{Def}

Now we may use the following notation for moduli spaces of triples:
 \begin{enumerate}[i.]
 \item
 Denote $\rr = (r_1,r_2)$ and $\dd = (d_1,d_2)$, and then regard
 \[
 \sN_\sg = \sN_\sg(\rr,\dd) = \sN_\sg(r_1,r_2,d_1,d_2)
 \]
 as the moduli space of $\sg$-polystable triples $T = (E_1,E_2,\phi)$ 
 such that $\rk(E_j) = r_j$ and $\deg(E_j) = d_j$.
 \item
 Denote by $\sN^s_\sg = \sN^s_\sg(\rr,\dd)$ the open subspace of
 $\sg$-stable triples.
 \item
 Call $(\rr,\dd) = (r_1,r_2,d_1,d_2)$ the type of the triple
 $T = (E_1,E_2,\phi)$.
 \end{enumerate}

\noindent The moduli space of $\sg$-stable triples $\sN^s_\sg = \sN^s_\sg(\rr,\dd) = \sN^s_\sg(r_1,r_2,d_1,d_2)$ is formally constructed by 
Bradlow and Garc\'ia-Prada~\cite{brgp} using dimensional reduction. There is also a direct construction by Schmitt~\cite{schmitt} 
using Geometric Invariance Theory (GIT). The reader also may consult the work of Bradlow, Garc\'ia-Prada and
Gothen~\cite{bgg1}; Mu\~noz, Oliveira and S\'anchez~\cite{mos}; or Mu\~noz, Ortega and V\'azquez-Gallo~\cite{mov} for the following 
details.

\noindent There are certain necessary conditions on $\sg$ for $\sg$-stable triples to exist. For triples of type $(\rr,\dd) = (r_1,r_2,d_1,d_2)$, 
consider the slopes $\mu_j = \frac{d_j}{r_j}$ for $j = 1,2$ and define 
\begin{equation}\label{sg-m}
 \sg_m = \mu_1 - \mu_2,
\end{equation}
and
\begin{equation}\label{sg-M}
 \sg_M = 
 \left(
 1 + \frac{r_1 + r_2}{|r_1 - r_2|}
 \right)
 (\mu_1 - \mu_2), \word{for} r_1 \neq r_2,
\end{equation}

\begin{Th}[{\cite[Th.~6.1.]{brgp}}]
 The moduli space of $\sg$-stable triples $\sN_{\sg}^{s}(r_1,r_2,d_1,d_2)$ is a complex analytic variety, which is projective when $\sg \in \bQ$. 
 A necessary condition for $\sN_{\sg}^{s}(r_1,r_2,d_1,d_2)$ to be non-empty is 
 
 $
 0 \leq \sg_m \leq \sg \leq \sg_M, \word{if} r_1 \neq r_2,
 $
 
 $
 0 \leq \sg_m \leq \sg, \word{if} r_1 = r_2.
 $
\end{Th}

\begin{Rmk}
 If $\mu_1 = \mu_2$ and $r_1 \neq r_2$ then $\sg_m = \sg_M = 0$ and so, $\sN^{s}_{\sg}(r_1,r_2,d_1,d_2)$ is empty unless $\sg = 0$.
\end{Rmk}

\begin{Prop}[{\cite[Prop.~2.4.]{bgg1}}]\label{dualtriples}
 The $\sg$-stability of $T = (E_1,E_2,\phi)$ is equivalent to the $\sg$-stability of the dual triple $T^{*} = (E_2^*,E_1^*,\phi^*)$, where 
 $\phi^*$ represents the conjugate transpose of $\phi$. The map $T\mapsto T^*$ defines an isomorphism
 \[
  \sN_{\sg}^{s}(r_1,r_2,d_1,d_2)
  \cong
  \sN_{\sg}^{s}(r_2,r_1,-d_2,-d_1).
 \]
\end{Prop}

\noindent The last result is frequently used to restrict the study of triples to $r_1 \geq r_2$ and appeal to duality when $r_1 < r_2$. We shall use 
this duality result later to study and compare the cohomology of $(1,2)$-VHS and $(2,1)$-VHS.

\begin{Def}
 For triples of type $(r_1,r_2,d_1,d_2)$, the number $\sg \in [\sg_m, \infty[$ is a {\em critical value} if there exist integers 
 $r_1', r_2', d_1'$ and $d_2'$ such that 
 \[
  \sg =
  \frac{(r_1 + r_2)(d_1' + d_2')-(r_1' + r_2')(d_1 + d_2)}{r_1' r_2 - r_1 r_2'}
 \]
or equivalently
\[
 \frac{d_1 + d_2}{r_1 + r_2} + \frac{\sg \cdot r_2}{r_1 + r_2}
 =
 \frac{d_1' + d_2'}{r_1' + r_2'} + \frac{\sg \cdot r_2'}{r_1' + r_2'}
\]
with 
$0 \leq r_j' \leq r_j$,
$
(r_1',r_2',d_1',d_2')
\neq
(r_1,r_2,d_1,d_2)
$,
$(r_1,r_2) \neq (0,0)$
and 
$r_1' r_2 \neq r_1 r_2'$.

We denote $\sg = \sg_c$ if it is critical.

The number $\sg \in [\sg_m, \infty[$ is called {\em generic} if it is not critical.
\end{Def}

\begin{Prop}[{\cite[Prop.~2.6.]{bgg1}}]
 Fix the type $(r_1,r_2,d_1,d_2)$.
 \begin{enumerate}[i.]
  \item 
  The critical values $\sg_c$ form a discrete subset of the interval $[\sg_m, \infty[$.
  \item 
  If $r_1 \neq r_2$ the critical values $\sg_c$ are finite and lie in the interval $[\sg_m,\sg_M]$.
  \item 
  The stability criteria for two values of $\sg$ lying between two consecutive critical values are equivalent; thus, the moduli spaces 
  are isomorphic.
  \item 
  If $\sg$ is generic and $\GCD(r_2,r_1 + r_2, d_1 + d_2) = 1$, then $\sg$-semistability is equivalent to $\sg$-stability.  
 \end{enumerate}
\end{Prop}

\subsection{Higgs Bundles} 
\label{ssec:1.2}
 
\begin{Def}\label{hbdef} 
 A \emph{Higgs bundle} over $X$ is a pair $(E, \varphi)$ where 
 $E \to X$ is a holomorphic vector bundle and $\varphi\: E \to E \ox K$
 is an endomorphism of $E$ twisted by~$K$, which is called a 
 \emph{Higgs field}. Note that $\varphi \in H^0(X;\End(E) \ox K)$.
\end{Def}

\begin{Def}\label{stablehb} 
 A subbundle $F \subseteq E$ is said to be \emph{$\varphi$-invariant} if $\varphi(F) \subseteq F \ox K$. A Higgs bundle is said to be 
 \emph{semistable} [respectively, \emph{stable}] if $\mu(F) \leq \mu(E)$ [respectively, $\mu(F) < \mu(E)$] for any nonzero 
 $\varphi$-invariant subbundle $F \subseteq E$
 [respectively, $F \subsetneq E$]. Finally, $(E,\varphi)$ is called \emph{polystable} if it is the direct sum of stable
 $\varphi$-invariant subbundles, all of the same slope.
\end{Def}

\noindent Fixing the rank $\rk(E) = r$ and the degree $\deg(E) = d$ of a Higgs bundle $(E,\varphi)$, the isomorphism classes of polystable bundles are
parametrized by a quasi-projective variety: the moduli space $\sM(r,d)$. Constructions of this space can be found in the work of
Hitchin~\cite{hit2}, using gauge theory, or in the work of Nitsure~\cite{nit}, using algebraic geometry methods.

{

\noindent Hitchin~\cite{hit2} works with the \emph{Yang-Mills self-duality equations} (SDE)
 \begin{equation} 
 \left\{
 \begin{array}{c c c}
  F_A + [\varphi, \varphi^{*}] & = & 0 \\
                         &   &   \\
  \bar{\partial}_A \varphi  & = & 0,
 \end{array}
 \right.
 \label{eq:YM} 
 \end{equation}
where $\varphi \in \Omega^{1,0}\big(X, \End(\sE)\big)$ is a complex auxiliary 
field and $F_A$ is the curvature of a connection $d_A$ which is compatible with $\bar{\partial}_A$, the holomorphic structure of the bundle $E = (\sE, \bar{\partial}_A)$, where $\sE$ is a smooth complex bundle of rank $\rk(\sE)=2$ and degree $\deg(\sE)=1$. Hitchin calls $\varphi$ {\em Higgs~field}, because it shares a lot of the physical and gauge properties of those of the Higgs boson. Here, $\varphi^{*}$  denotes the adjoint of $\varphi$ with respect to the hermitian metric on $E$,\footnote{By Hitchin~\cite{hit2}, there is a hermitian metric on E.} and $[\cdot,\cdot]$ denotes the extension of the Lie bracket to Lie algebra-valued forms.

\noindent The set of solutions
 $$
 \beta(\sE) = 
 \{
 (\bar{\partial}_A, \varphi)| \word{solution of}(\ref{eq:YM})
 \}
 \subseteq
 \sA^{0,1}(\sE) \times \Omega^{1,0}\big(X,\End(\sE)\big)
 $$
where $\sA^{0,1}(\sE)$ denotes the space of holomorphic structures on $\sE$,
$\Omega^{1,0}\big(X,\End(\sE)\big)$ denotes $(1,0)$-forms of $X$ with values on $\End(\sE)$,
 and the collection
 $$
 \beta_{ps}(2,1) = 
 \{
 \beta(\sE)| \sE \word{polystable,} \rk(\sE) = 2, \deg(\sE) = 1 
 \},
 $$
allow Hitchin to construct the Moduli space of solutions to SDE~(\ref{eq:YM})
  $$
  \sM^{YM}(2,1) = \beta_{ps}(2,1) / \gG^{\bC},
  $$
and
  $$
   \sM^{YM}_{s}(2,1) = \beta_{s}(2,1) / \gG^{\bC}
   \subseteq
   \sM^{YM}(2,1),
  $$
the moduli space of stable solutions to SDE~(\ref{eq:YM}), where $\gG^{\bC}$ represents the complex gauge group, which acts by conjugation on $\beta_{ps}(2,1)$ and $\beta_{s}(2,1)$.

\begin{Rmk}
 Since $\GCD(2,1) = 1$, then $\beta_{ps}(2,1) = \beta_{s}^(2,1)$ and so 
  $$
  \sM^{YM}(r,d) = \sM^{YM}_{s}(r,d).
  $$
\end{Rmk}

\noindent Using definition~\ref{hbdef}, and the notion of stability~\ref{stablehb}, Hitchin~\cite{hit2} presents an alternative algebro-geometric construction of the moduli space of polystable Higgs bundles:
   $$
   \sM^{H}(2,1) = \{(E,\varphi)|\ E \word{polystable} \} / \gG^{\bC}
   $$
and the subspace
   $$
   \sM^{H}_{s}(2,1) = 
   \{(E,\varphi)|\ E \word{stable} \} / \gG^{\bC}
   \subseteq
   \sM^{H}(2,1),
   $$
of stable Higgs bundles.

\begin{Rmk}
  Again, $\GCD(2,1) = 1$ implies $\sM^{H}(2,1) = \sM^{H}_{s}(2,1)$. 
\end{Rmk}

\noindent Finally, Hitchin~\cite{hit2} concludes:

 \begin{Th}\textnormal{\cite{hit2}}
  There is a homeomorphism of topological spaces
  $$
   \sM^{H}(2,1)
   \cong
   \sM^{YM}(2,1).
   \QEDA
  $$
 \end{Th}

\noindent Because of the last homeomorphism, from now on it will be enough to denote 
$
\sM(2,1)
=
\sM^{H}(2,1)
   \cong
\sM^{YM}(2,1)
$ 
for brief. Hitchin~\cite{hit2} computes the real dimension of the moduli space of stable rank two pairs $(E,\varphi)$:

\begin{Th}[{\cite[Th.~5.8.]{hit2}}]\label{hit5.8.}
 Let $X=\Sg_g$ be a compact Riemann surface of genus $g > 1$. The moduli space $\sM(2,1)$ of all stable pairs $(E,\varphi)$, where $E\to X$ 
 is a rank two holomorphic vector bundle of degree one, and $\varphi$ is a trace free holomorphic section of $\End(E)\ox K$, is a smooth real 
 manifold of dimension
 \[
  \dim_{\bR}\big(\sM(2,1)\big) = 12(g - 1).
 \]
\end{Th}

\begin{Cor}
 The space $\sM(2,1)$ is a quasi--projective variety of complex dimension
 $$
 \dim_{\bC}\big(\sM(2,1)\big) = 3(2g - 2).
 $$ 
\end{Cor}

\noindent Nitsure~\cite{nit} constructs the moduli space of Higgs bundles of general rank $r$ and degree $d$ using Geometric Invariant Theory (GIT), and computes its dimension:

\begin{Th}[{\cite{nit}}]
 The space $\sM(r,d)$ is a quasi--projective variety of complex dimension
 $$
 \dim_{\bC}\big(\sM(r,d)\big) = (r^2-1)(2g - 2).
 $$ 
\end{Th}

\begin{Rmk}
 Note that the result of Nitsure~\cite{nit} coincides with the result of Hitchin~\cite{hit2} for rank two Higgs bundles.
\end{Rmk}

\noindent Simpson~\cite{sim2} calls the pair $(E,\varphi)$ as {\em Higgs bundle}. His work contributes generalizing Higgs bundles to higher dimensions and proving an analogous proposition to Theorem~\ref{hit5.8.} for general rank, with the same notion of stability in mind, considering the moduli space of Higgs bundles as the quotient
   $$
   \sM^{H}(r,d) = \{(E,\varphi)|\ E \word{polystable} \} / \gG^{\bC}
   $$
and the subspace
   $$
   \sM^{H}_{s}(r,d) = 
   \{(E,\varphi)|\ E \word{stable} \} / \gG^{\bC}
   \subseteq
   \sM^{H}(r,d),
   $$
of stable Higgs bundles.

\begin{Rmk}
  Once again, $\GCD(r,d) = 1$ implies $\sM^{H}(r,d) = \sM^{H}_{s}(r,d)$. See 
  Simpson~\cite{sim2} for details.
\end{Rmk}

 \begin{Th}[{\cite[Prop.~1.5]{sim2}}]
  There is a homeomorphism of topological spaces
  $$
   \sM^{H}(r,d)
   \cong
   \sM^{YM}(r,d).
   \QEDA
  $$
 \end{Th}
}

\noindent Also for general rank, we denote 
$
\sM(r,d)
=
\sM^{H}(r,d)
   \cong
\sM^{YM}(r,d)
$ 
for brief, or even $\sM = \sM(r,d)$ when the rank $r$ and the degree $d$ are clear. Recall that we are considering the co-prime case $\GCD(r,d) = 1$, 
in order for $\sM = \sM(r,d)$ to be a smooth variety. An important feature of $\sM(r,d)$ is that it carries an action 
of~$\bC^*$: $z \cdot (E, \varphi) = (E, z \cdot \varphi)$. According to Hitchin~\cite{hit2}, $(\sM,I,\Om)$ is a K\"ahler manifold, where $I$ is 
its complex structure and $\Om$ its corresponding
K\"ahler form. Furthermore, $\bC^*$ acts on $\sM$ biholomorphically with respect to the complex structure $I$ by the aforementioned action, 
where the K\"ahler form $\Om$ is invariant under the induced action $e^{i\theta} \cdot (E, \varphi) = (E, e^{i\theta} \cdot \varphi)$ of the
circle $\bS^1 \subseteq \bC^*$. Besides, this circle action is Hamiltonian, with proper moment map $f \: \sM \to \bR$ defined by:
\begin{equation}
f(E, \varphi) = \frac{1}{2\pi} \|\varphi\|_{L^2}^2
= \frac{i}{2\pi} \int_X \tr(\varphi \varphi^*)
\label{momentum_map_intro} 
\end{equation}
where $\varphi^*$ is the adjoint of $\varphi$ with respect to the hermitian metric on~$E$, and $f$ has finitely many critical values.

\noindent There is another important fact mentioned by Hitchin~\cite{hit2} (see the original version in the work of Frankel~\cite{fra}, and its application to Higgs bundles 
in~\cite{hit2}): the critical points of $f$ are exactly the fixed points of the circle action on~$\sM$.

\noindent If $(E, \varphi) = (E, e^{i\theta}\varphi)$ and $\varphi = 0$, then the critical value is $c_0 = 0$. The corresponding critical submanifold is
$F_0 = f^{-1}(c_{0}) = f^{-1}(0) = \sN$, the moduli space of stable bundles (see Hitchin~\cite{hit2}, Simpson~\cite{sim2}, or Bradlow, 
Garc\'ia-Prada, Gothen~\cite{bgg2} for details). On the other hand, when $\varphi \neq 0$, there is a type of
algebraic structure for Higgs bundles introduced by Simpson~\cite{sim1, sim2}: a \emph{variation of Hodge structure}, or simply a \emph{VHS}, 
for a Higgs bundle $(E, \varphi)$ is a decomposition:
\begin{equation}
E = \bigoplus_{j=1}^n E_j \word{such that} 
\varphi \: E_j \to E_{j+1} \ox K \textmd{ for } 1 \leq j \leq n - 1.
\label{VHS} 
\end{equation} 

\noindent It has been proved by Simpson~\cite{sim1} that the fixed points of the circle action on $\sM(r,d)$, and so, the critical points of $f$, are
these variations of the Hodge structure VHS, where the critical values $c_\la = f(E,\varphi)$ will depend on the degrees $d_j$ of the
components $E_j \subseteq E$, and $\lambda$ denotes the index of the critical point for the Morse-Bott function $f$. By Morse theory, we can stratify $\sM$ in such a way that there is a non-minimal critical submanifold
$F_\la = f^{-1}(c_\la)$ for each nonzero critical value $0 \neq c_\la = f(E,\varphi)$ where $(E,\varphi)$ represents a fixed point
of the circle action, or equivalently, a VHS. We then say that $(E,\varphi)$ is a $\big(\rk(E_1),\dots,\rk(E_n)\big)$-VHS.

\noindent The Morse indexes of the critical submanifolds of the moduli space of stable Higgs bundles $\sM(r,d)$ for general rank $r$ were 
calculated by Bradlow~et~al.~\cite{bgg2}:

\begin{Prop}[{\cite[Prop.~3.10.]{bgg2}}]
Let $(E,\varphi)$ be a stable Higgs bundle which corresponds to a critical point of $f$. Then the Morse index of the corresponding critical submanifold $(E,\varphi)\in F_{\lambda}$ is
\[
 {index}(F_{\la}) = 
 2 \sum_{k > 0} \dim \Big(\mathbb{H}^1\big(C_{k}^{\bullet}(E,\varphi) \big) \Big)
\]
where
\[
 \dim \Big(\mathbb{H}^1\big(C_{k}^{\bullet}(E,\varphi) \big) \Big)
 =
 -\chi\big(C_{k}^{\bullet}(E,\varphi) \big)
\]
and $C_{k}^{\bullet}(E,\varphi)$ is the deformation complex of the pair $(E,\varphi)$. \QEDA
\end{Prop}

\begin{Prop}[{\cite[Prop.~3.12.(2)]{bgg2}}]
 For $\sM(r,d)$
 \[
 {index}(F_{\la}) \geq 
 (r - 1)(2g - 2)
 \]
 for every non-minimal critical submanifold $F_{\lambda} \sse \sM(r,d)$.\QEDA
\end{Prop}

\begin{Prop}[{\cite[Prop.~3.14.]{bgg2}}]
 Let $F_{0} \sse \sM(r,d)$ be the set of local minima. Then 
 \[
  F_{0} = 
  \left\{ 
  (E,\varphi) \in \sM(r,d) |\ \varphi = 0
  \right\}.
 \]
Hence, $F_{0}$ coincides with $\sN(r,d)$, the moduli space of semistable bundles of rank $r$ and degree $d$, which equals the subvariety $\sN^{s}(r,d) \sse \sN(r,d)$ corresponding to stable bundles if $\GCD(r,d) = 1$. \QEDA
\end{Prop}

\subsection{k-Higgs Bundles} 
\label{ssec:1.3}

\begin{Def} 
  Fix a point $p \in X$, and let $L_p = \sO_X(p)$ be the associated line bundle to the divisor $p \in \Sym^1(X) = X$. A
  \emph{$k$-Higgs bundle} (or \emph{Higgs bundle with poles of order~$k$}) is a pair $(E,\varphi^k)$ where:
  \[
  E \xrightarrow{\ \varphi^k\ } E \ox K \ox L_p^{ \ox k} = E \ox K(k\cdot p)
  \]
  and where the morphism $\varphi^k \in H^0\big(X, \End(E) \ox K(k\cdot p)\big)$ 
  is what we call  a \emph{Higgs field with poles of order~$k$}. The moduli space of $k$-Higgs bundles of rank~$r$ and degree~$d$ 
  is denoted by $\sM^k(r,d)$. For simplicity, we will suppose that $\GCD(r,d) = 1$, and so, $\sM^k(r,d)$ will be smooth.
\end{Def}
\begin{Rmk}
So far, everything we have said for Higgs bundles and the moduli space $\sM(r,d)$ also hold for $k$-Higgs bundles and the moduli spaces $\sM^k(r,d)$. 
\end{Rmk}

\noindent There is a new tool for $k$-Higgs bundles: an embedding of the form
\begin{equation}
 i_k \: \sM^k(r,d) \to \sM^{k+1}(r,d) 
 \: [(E,\varphi^k)] \longmapsto [(E,\varphi^k \ox s_p)]
 \label{eq:twist-embed} 
\end{equation}
where $0 \neq s_p \in H^0(X, L_p)$ is a nonzero fixed section of~$L_p$.

\noindent When the rank is $r = 2$ or $r = 3$, the map $i_k$ induces embeddings of the form
\[
F^k_{\lambda} 
\xrightarrow{\quad i_k \quad} 
F^{k+1}_{\lambda}\quad \forall \lambda,
\]
for non-minimal\footnote{For stable pairs in $F^k_ 0 = \sN_k$, the embeddings are trivial. Cf.~Hausel~\cite[Ch.~3.~Sec.~3.4.]{hau}.} critical 
submanifolds $F^k_{\la}$, where $\la$ is the Morse index of the submanifold. 

\noindent For $\sM^k(2,1)$, when $r = 2$, the Morse index is $\lambda = 2(g + 2d_1 - 2) + k$, which depends just on the parameter 
$d_1 \in ]\frac{1}{2},g - \frac{1}{2} + \frac{k}{2}[\cap \bZ$ since $d = \deg(E) = 1$ (co-prime case $\GCD(r,d) = 1$), $g \geq 2$ is fixed, 
and $k$ is the order of the pole.

\noindent Hence, we may index the $(1,1)$-critical submanifolds as $F^{k}_{d_1}$, and the embeddings are well defined:
\[
  \begin{xy}
    (0,0)*+{F^{k}_{d_1}\cong \Sym^{\bar{d_1}+k}(X)}="a";
    (65,0)*+{\Sym^{\bar{d_1}+k+1}(X)\cong F^{k+1}_{d_1}}="b";
    {\ar@{->} "a";"b"};
    (5,-10)*+{i_k\: D}="c";
    (60,-10)*+{D + p}="d";
    {\ar@{|->} "c";"d"};
  \end{xy}
\]
where $D\in \Sym^{\bar{d_1}+k}(X)$ is a divisor and $\bar{d_1} = 2g - 2d_1 - 1$ for simplicity. The reader may see 
Bento~\cite{ben}, Hausel~\cite{hau}, Hausel~and~Thaddeus~\cite{hath1, hath2} or Hitchin~\cite{hit2} for details.

\noindent For $\sM^k(3,d)$, when $r = 3$, we have three types of critical submanifolds. For $(1,2)$-critical submanifolds $F^k_{\lambda}$, 
the Morse index is given by $\lambda = 2(3d_1 - d + 2g - 2 + k)$ 
where once again $d_1 = \deg(E_1)$ is the degree of the maximal destabilizing line bundle $E_1\subseteq E$, and so, we are in a very similar situation 
than before. Without lost of generality, we may pick the index $d_1$ for the $(1,2)$-critical submanifolds, and the embeddings become
\[
  \begin{xy}
    (0,0)*+{i_k\: F^{k}_{d_1}}="a";
    (65,0)*+{F^{k+1}_{d_1}}="b";
    {\ar@{->} "a";"b"};
    (0,-10)*+{(E,\varphi^k) = 
    \Big(
 E_1\oplus E_2,
 \left(
 \begin{array}{c c}
  0 & 0\\
  \phi^k & 0
 \end{array}
 \right)
 \Big)
    }="c";
    (80,-10)*+{(E,\varphi^k\ox s_p) = 
    \Big(
 E_1\oplus E_2,
 \left(
 \begin{array}{c c}
  0 & 0\\
  \phi^k \ox s_p & 0
 \end{array}
 \right)
 \Big)
    }="d";
    {\ar@{|->} "c";"d"};
  \end{xy}
\]
where $\phi^k\: E_1 \to E_2\ox K(kp)$ and $\frac{d}{3} < d_1 < \frac{d}{3} + g - 1 + \frac{k}{2}$ as we shall see below 
(see Bento~\cite{ben}, Gothen~\cite{got} or Z-R~\cite{z-r} for interval details). Moreover
\[
 (\phi^k \ox s_p)(E_1) \subseteq
 \phi^k(E_1) \ox L_p \subseteq
 E_2 \ox K \ox L_p^{\ox k} \ox L_p = 
 E_2 \ox K \ox L_p^{\ox k+1}
\]
and therefore
\[
 i_k(F_{d_1}^{k}) \subseteq F_{d_1}^{k+1}.
\]
\begin{Lem}[{\cite[Lema~2.3.1.]{ben}}]\label{bento2.3.1.}
Let $(E,\varphi^k)\in F_{d_1}^{k}$ be a $k$-Higgs bundle of the form
\[
(E,\varphi^k) = 
 \Big(
  E_1\oplus E_2,
  \left(
  \begin{array}{c c}
     0   & 0\\
  \phi^k & 0
 \end{array}
 \right)
 \Big).
\]
Hence, $(E,\varphi^k)$ is stable if and only if the holomorphic triple $T = (E_2\ox K(k\cdot p),E_1,\phi^k)$ is $\sg_{H}$-stable where 
$\sg_H = \sg_H(k) = \deg\big(K(k\cdot p)\big) = 2g - 2 + k$.
\end{Lem}

\begin{proof}
The pair $(E,\varphi^k)$ is stable if and only if the holomorphic chain
\[
\mathcal{C}\: \sE_1 = E_1 \to \sE_2 = E_2\ox K(k\cdot p)
\]
is $\alpha = \big(\sg_H(k),0\big)$ stable, which means that any proper subchain $\mathcal{C}'\: \sE_1'\to \sE_2'$ has $\alpha$-slope 
$\mu_{\alpha}(\mathcal{C}') < \mu_{\alpha}(\mathcal{C})$; considering a subbundle $\sE_1'\sse \sE_1 = E_1$ with degree $\deg(\sE_1') = d'_1$ 
and a subbundle $\sE_2' \sse \sE_2$ with degree $\deg(\sE_2') = d'_2$, then $\sE_2'\ox \big(K(k\cdot p) \big)^{*}\sse E_2$ is a 
subbundle with degree $\deg\big(\sE_2'\ox K(k\cdot p)^{*}\big) = d_2' - r_2'(2g - 2 + k)$, and we have
\[
 (E,\varphi^k) \word{stable} 
 \Longleftrightarrow 
 \frac{d_1' + d_2' - r_2'(2g - 2 + k)}{r_1' + r_2'} < \frac{d_1 + d_2}{3}
\]
where $r_j' = \rk(\sE_j')$.

\noindent On the other hand, suppose 
$(E,\varphi^k) = 
\Big(
  E_1\oplus E_2,
  \left(
  \begin{array}{c c}
     0   & 0\\
  \phi^k & 0
 \end{array}
 \right)
\Big)
$
is stable. The holomorphic triple $T = \big(E_2\ox K(k\cdot p), E_1, \phi^k\big)$ is $\sg$-stable if and only if any proper subtriple 
$T' = \big(E'_2\ox K(k\cdot p), E'_1, (\phi^k)'\big)$ satisfies 
$
 \mu_{\sg}(T') < \mu_{\sg}(T)
 \Leftrightarrow 
$
\[
 \frac{\deg(E_1') + \deg\big(E_2'\ox K(k\cdot p)\big) + \rk(E_1') \cdot \sg}{\rk(E_1') + \rk(E_2')} < 
\]
\[ 
 \frac{\deg(E_1) + \deg\big(E_2\ox K(k\cdot p)\big) + \rk(E_1)\cdot \sg}{\rk(E_1) + \rk(E_2)}
 \Leftrightarrow 
\]
\[
 \frac{d_1' + d_2' + r_2'(2g - 2 + k) + r_1' \sg}{r_1' + r_2'} < \frac{d_1 + d_2 + 2(2g - 2 + k) + \sg}{1 + 2}
\]
\[
 \Leftrightarrow 
 \frac{d_1' + d_2' + r_2'\sg_H(k) + r_1' \sg}{r_1' + r_2'} < \frac{d_1 + d_2 + 2\sg_H(k) + \sg}{3}.
\]
Since $(E,\varphi^k)$ is stable, it is enough to take
\[
 \frac{r_2'\cdot \sg_H(k) + r_1'\cdot \sg}{r_1' + r_2'} = \frac{2\cdot \sg_H(k) + \sg}{3}
 \Leftrightarrow
 r_2' \sg_H(k) - 2 r_1' \sg_H(k) = r_2' \sg - 2 r_1' \sg
 \]
 \[
 \Leftrightarrow
 (r_2' - 2r_1') \sg_H(k) = (r_2' - 2r_1') \sg
 \Leftrightarrow
 \sg_H(k) = \sg
\]
and so, the triple $T = (E_2\ox K(k\cdot p),E_1,\phi^k)$ is $\sg_H(k)$-stable.
\end{proof}

\begin{Rmk}
 Note that, at the last part of the proof above, the equality $r_2' = 2r_1'$ does not hold, because of the stability of $(E,\varphi^k)$.
\end{Rmk}

\begin{Prop}[{\cite[Proposi\c{c}\~ao~2.3.2.]{ben}}]\label{bento2.3.2.}
For each $d_1 \in  \big]\frac{d}{3},\frac{d}{3}+\frac{\sg_{H}(k)}{2}\big[\cap \bZ$ there is a $(1,2)$ critical submanifold of $\sM^k(3,d)$ of the form
\[
F_{d_1}^{k} = 
\big\{
(E,\varphi^k) = 
 \Big(
  E_1\oplus E_2,
  \left(
  \begin{array}{c c}
     0   & 0\\
  \phi^k & 0
 \end{array}
 \right)
 \Big)\:
 d_1 = \deg(E_1),\ 
 \rk(E_1) = 1,\
 \rk(E_2) = 2
\big\}.
\]
Furthermore, there is an isomorphism
\[
F_{d_1}^{k} \cong \sN_{\sg_H(k)}(2,1,d - d_1 + 2\sg_H(k),d_1)
\]
with the moduli space of $\sg_{H}(k)$-stable triples of this type.
\end{Prop}

\begin{proof}
The isomorphism is given by:
\[
  \begin{xy}
    (0,0)*+{F^{k}_{d_1}}="a";
    (65,0)*+{\sN_{\sg_H(k)}(2,1,d - d_1 + 2\sg_H(k),d_1)}="b";
    {\ar@{->} "a";"b"};
    (0,-10)*+{(E,\varphi^k) = 
    (
 E_1\oplus E_2,
 \left(
 \begin{array}{c c}
  0 & 0\\
  \phi^k & 0
 \end{array}
 \right)
 )
    }="c";
    (80,-10)*+{(E_2 \ox K(k\cdot p), E_1,\phi^k)}="d";
    {\ar@{|->} "c";"d"};
  \end{xy}
\]
where $\sg_{H}(k) = 2g - 2 + k$ as above.

\noindent In general, for the critical values $\sg_{c}$, we know that the interval is $\sg_{m} \leq \sg_{c} \leq \sg_{M}$
where 
\[
\sg_{m} = \mu_2 - \mu_1 =
\frac{\deg\big(E_2\ox K(k\cdot p)\big)}{r_2} - \frac{\deg(E_1)}{r_1} = 
\frac{d - d_1 + 2\sg_H(k)}{2} - d_1
\]
and 
\[
\sg_{M} = \Big(1 + \frac{r_2 + r_1}{|r_2 - r_1|} \Big)(\mu_2 - \mu_1) = 
4 \sg_{m} = 2\big(d - 3d_1 +2\sg_H(k)\big)
\]
(see \cite{bgg1}). So, in particular we have
\[
\sg_{H}(k) = 2g - 2 + k > 
\sg_{m} = \frac{d - d_1 + 2\sg_{H}(k)}{2} - d_1
\Longleftrightarrow
d_1 > \frac{d}{3}.
\]
On the other hand, we have
\[
\sg_{H}(k) < \sg_{M} = 
2\big(d - 3d_1 +2\sg_{H}(k)\big)
\Longleftrightarrow
d_1 < \frac{d}{3} + \frac{\sg_H(k)}{2}.
\]
Therefore,
\[
d_1\in 
\Big]
\frac{d}{3},
\frac{d}{3} + \frac{\sg_H(k)}{2}
\Big[\cap \bZ.
\]
\end{proof}

\begin{Rmk}
 In general, for the critical values $\sg_{c}$, the interval $[\sg_{m}, \sg_{M}]$ is closed. Nevertheless, for our particular case of 
 interest $\sg_m < \sg_H(k) < \sg_M$, so the interval will be open.
\end{Rmk}

\noindent For $(2,1)$-critical submanifolds $F^k_{\la} = F^k_{d_2}$, the Morse index is given by $\la = 2(3d_2 - 2d + 2g - 2 + k)$;
here $d_2 = \deg(E_2)$ is the degree of the maximal destabilizing bundle $E_2\subseteq E$ of rank two this time, and so, we are in 
a very similar situation than before:
\[
  \begin{xy}
    (0,0)*+{i_k\: F^{k}_{d_2}}="a";
    (65,0)*+{F^{k+1}_{d_2}}="b";
    {\ar@{->} "a";"b"};
    (0,-10)*+{(E,\varphi^k) = 
    \Big(
 E_2\oplus E_1,
 \left(
 \begin{array}{c c}
  0 & 0\\
  \phi^k & 0
 \end{array}
 \right)
 \Big)
    }="c";
    (80,-10)*+{(E,\varphi^k\ox s_p) = 
    \Big(
 E_2\oplus E_1,
 \left(
 \begin{array}{c c}
  0 & 0\\
  \phi^k \ox s_p & 0
 \end{array}
 \right)
 \Big)
    }="d";
    {\ar@{|->} "c";"d"};
  \end{xy}
\]
with $\phi^k\: E_2 \to E_1\ox K(kp)$ and $\frac{2d}{3} < d_2 < \frac{2d}{3} + g - 1 + \frac{k}{2}$ instead (see Bento~\cite{ben}, 
Gothen~\cite{got} or Z-R~\cite{z-r}). Furthermore,
\[
 (\phi^k \ox s_p)(E_2) \subseteq
 \phi^k(E_2) \ox L_p \subseteq
 E_1 \ox K \ox L_p^{\ox k} \ox L_p = 
 E_1 \ox K \ox L_p^{\ox k+1}
\]
and hence
\[
 i_k(F_{d_2}^{k}) \subseteq F_{d_2}^{k+1}.
\]

\begin{Lem}[{\cite[Lema~2.3.5.]{ben}}]\label{bento2.3.5.}
Let $(E,\varphi^k)\in F_{d_2}^{k}$ be a $k$-Higgs bundle of the form
\[
(E,\varphi^k) = 
 \Big(
  E_2\oplus E_1,
  \left(
  \begin{array}{c c}
     0   & 0\\
  \phi^k & 0
 \end{array}
 \right)
 \Big).
\]
Hence, $(E,\varphi^k)$ is stable if and only if the holomorphic triple $T = (E_1\ox K(k\cdot p),E_2,\phi^k)$ is $\sg_{H}$-stable where 
$\sg_H = \sg_H(k) = \deg\big(K(k\cdot p)\big) = 2g - 2 + k$.
\end{Lem}

\begin{proof}
The proof is very similar to that presented for the $(1,2)$-case in Lemma~\ref{bento2.3.1.}.
\end{proof}

\begin{Prop}[{\cite[Proposi\c{c}\~ao~2.3.6.]{ben}}]\label{bento2.3.6.}
For each $d_2 \in  \big]\frac{2d}{3},\frac{2d}{3}+\frac{\sg_H(k)}{2}\big[\cap \bZ$ there is a $(2,1)$ critical submanifold of $\sM^k(3,d)$ 
of the form
\[
F_{d_2}^{k} = 
\big\{
(E,\varphi^k) = 
 \Big(
  E_2\oplus E_1,
  \left(
  \begin{array}{c c}
     0   & 0\\
  \phi^k & 0
 \end{array}
 \right)
 \Big)\:
 d_2 = \deg(E_2),\ 
 \rk(E_2) = 2,\
 \rk(E_1) = 1
\big\}.
\]
Furthermore, there is an isomorphism
\[
F_{d_2}^{k} \cong \sN_{\sg_H(k)}(1,2,d - d_2 + \sg_H(k),d_2)
\]
with the moduli space of $\sg_{H}(k)$-stable triples.
\end{Prop}

\begin{proof}
In this case, the isomorphism is given by:
\[
  \begin{xy}
    (0,0)*+{F^{k}_{d_2}}="a";
    (65,0)*+{\sN_{\sg_H(k)}(1,2,d - d_2 + \sg_H(k),d_2)}="b";
    {\ar@{->} "a";"b"};
    (0,-10)*+{(E,\varphi^k) = 
    (
 E_2\oplus E_1,
 \left(
 \begin{array}{c c}
  0 & 0\\
  \phi^k & 0
 \end{array}
 \right)
 )
    }="c";
    (80,-10)*+{(E_1 \ox K(k\cdot p), E_2,\phi^k)}="d";
    {\ar@{|->} "c";"d"}.
  \end{xy}
\]
The rest of the proof is very similar to the $(1,2)$-case presented in Proposition~\ref{bento2.3.2.}.
\end{proof}

\noindent Finally, we consider the $(1,1,1)$-critical submanifolds of the form
\[
F^k_{\lambda} =
F_{d_1 d_2 d_3}^{k} =
\Bigg\{
\Big(
L_1\oplus L_2\oplus L_3,
\left(
 \begin{array}{c c c}
     0       &     0     & 0\\ 
 \phi^k_{21} &     0     & 0\\
     0     & \phi^k_{32} & 0
 \end{array}
\right)
\Big)
\Bigg\}\sse
\sM^{k}(3,d),
\]
where $L_j \sse E$ is a line bundle for $j = \{1,2,3\}$, we denote $d_j = \deg(L_j)$ and so, the degree of $E\to X$ could be write as 
$\deg(E) = d = d_1 + d_2 + d_3$. Using the fact that $d_3 = d - d_1 - d_2$ and considering auxiliar bundles 
$M_{j} = L_j^{*}\ox L_{j+1} \ox K(k\cdot p) \to X$ of degree $m_j = \deg(M_j) = d_{j+1} - d_{j} + \sg_H(k)$, we may write, for simplicity 
$\varphi_{j} \in H^{0}(M_j)$ where $\varphi_{1}^k=\phi_{21}^k$ and $\varphi_{2}^k=\phi_{32}^k$, and hence 
$M_j = \sO(D_j)$ where $D_j = \divv(\varphi_j)$. 

\noindent Note that $\varphi_j \neq 0 \Rightarrow m_j \geq 0$. Furthermore
\[
d_3 = d - d_1 - d_2 \Longleftrightarrow d_3 = \frac{d + 2m_2 + m_1 - 3\sg_H(k)}{3}
\]
and hence $d + m_1 + 2m_2 = 0\mod3$.

\noindent Using all the above notation, we can re-write the $(1,1,1)$-critical submanifolds as
\[
F^k_{\lambda} =
F_{m_1 m_2}^{k} =
\Bigg\{
\Big(
L_1\oplus L_2\oplus L_3,
\left(
 \begin{array}{c c c}
     0     &     0     & 0\\ 
 \varphi^k_{1} &     0     & 0\\
     0     & \varphi^k_{2} & 0
 \end{array}
\right)
\Big)
\Bigg\}\sse
\sM^{k}(3,d),
\]
and conclude that
\begin{Prop}[{\cite[Proposi\c{c}\~ao~2.3.9.]{ben}}]\label{bento2.3.9.}
For each pair $(m_1,m_2)\in \Omega$, there is a $(1,1,1)$-critical submanifold $F^{k}_{m_1 m_2}\sse \sM^k(3,d)$,  where
\[
\Omega = 
\Bigg\{
(x,y)\in \bN^{*}\times \bN^{*}\:
\begin{array}{c}
     d + x + 2y = 0\mod3\\
     2x + y < 3\sg_H(k)\\
     x + 2y < 3\sg_H(k)
\end{array}
\Bigg\}.
\]
\end{Prop}

\begin{proof}
 The stability conditions in this case are
 \[
 \mu(L_2 \oplus L_3) < \mu(E) 
 \Longleftrightarrow
 \frac{d_2 + d_3}{2} < \frac{d}{3}
 \word{and}
 \mu(L_3) < \mu(E)
 \Longleftrightarrow
 d_3 < \frac{d}{3}.
 \]
 In terms of $m_j \geq 0$ we get
 \[
  d_3 < \frac{d}{3} \Longleftrightarrow
  2m_2 +  m_1 < 3\sg_H(k)
 \word{and}
 \frac{d_2 + d_3}{2} < \frac{d}{3}
 \Longleftrightarrow
 2m_1 + m_2 < 3\sg_H(k)
 \]
\end{proof}

\begin{Rmk}
 For the particular case of $(1,1,1)$-critical submanifolds, note that 
 \[
  (\varphi_j^k \ox s_p)(L_j) \sse 
  (\varphi_j^k)(L_j) \ox L_p =
  L_{j+1} \ox K \ox L_p^{\ox k} \ox L_p =
  L_{j+1} \ox K \ox L_p^{\ox k+1}
 \]
and hence
\[
 i_k(F_{m_1 m_2}^{k}) \subseteq F_{m_1 m_2}^{k+1}.
\]
\end{Rmk}

\begin{Th}\label{(1,1,1)-VHS--SymP-Iso}
There is an isomorphism 
\[
F_{m_1 m_2}^{k} \cong 
\Sym^{\bar{m}_1+k}(X) \times \Sym^{\bar{m}_2+k}(X) \times \sJ^{d_3}(X)
\]
for each pair $(m_1,m_2)\in \Omega$, where 
$J^{d_3}(X)$ is the Jacobian of $X$, the moduli space of stable line bundles of degree $d_3$,
and $\bar{m}_j = m_j - k = d_{j+1} - d_j + 2g - 2$.
\end{Th}

\begin{proof}
 It is enough to take
 \[
  \begin{xy}
    (0,0)*+{F^{k}_{m_1 m_2}}="a";
    (65,0)*+{\Sym^{\bar{m}_1+k}(X) \times \Sym^{\bar{m}_2+k}(X) \times \sJ^{d_3}(X)}="b";
    {\ar@{->} "a";"b"};
    (0,-15)*+{(E,\varphi^k) = 
    (
 L_1\oplus L_2\oplus L_3,
 \left(
 \begin{array}{c c c}
     0       &     0       & 0\\ 
 \varphi^k_{1} &     0       & 0\\
     0       & \varphi^k_{2} & 0
 \end{array}
 \right)
 )
    }="c";
    (80,-15)*+{(\divv(\varphi^k_1),\divv(\varphi^k_2),L_3)}="d";
    {\ar@{|->} "c";"d"}.
  \end{xy}
\]
\end{proof}

\noindent In this case, the Morse index for $(1,1,1)$-critical submanifolds $F^k_{\lambda} = F^k_{m_1m_2}$ is given by 
$\lambda = 2\big(4(2g - 2)- m _1 - m_2 + 3k\big)$. The reader may consult Gothen~\cite{got} or Bento~\cite{ben} 
for details.

\begin{Rmk}
 Lemma~\ref{bento2.3.1.}, Proposition~\ref{bento2.3.2.}, Lemma~\ref{bento2.3.5.}, Proposition~\ref{bento2.3.6.}, 
 and Proposition~\ref{bento2.3.9.} are presented by Bento~\cite{ben} for the general case of rank three Hitchin 
 pairs. Here, we presented them for the particular case of rank three $k$-Higgs bundles.
\end{Rmk}

\noindent From the embeddings 
\[
F^k_{\la}  
\xrightarrow{\quad i_k \quad} 
F^{k+1}_{\la}\quad \forall \la,
\]
above mentioned, we get induced isomorphisms in cohomology:
\[
H^{j}(F_{\la}^{k+1},\mathbb{Z}) 
\xrightarrow{\quad \cong \quad} 
H^{j}(F_{\la}^{k},\mathbb{Z})
\]
for all $\la$, for certain values of $j$ in terms of $k$. Our goal is to find the range of~$j$ for which these isomorphisms hold.

\noindent The embeddings restricted to $(1,1)$-critical submanifolds in the rank two case, were studied by Hausel~\cite{hau} and presented by 
Hausel and Thaddeus~\cite{hath2}. Here, we focus on rank three.

\noindent If we restrict the embeddings to critical manifolds of type $(1,2)$:
\begin{equation}
  \begin{array}{r c l}
 F^k_{d_1} & \xrightarrow{\quad i_k \quad} & F^{k+1}_{d_1}\\
 & & \\
 \biggl(E_1 \oplus E_2, 
\left(
  \begin{array}{c c}
   0 & 0\\
   \varphi^{k}_{21} & 0
  \end{array}
\right)
 \biggl) &
\longmapsto &
 \biggl(E_1 \oplus E_2, 
\left(
  \begin{array}{c c}
   0 & 0\\
   \varphi^{k}_{21} \ox s_p & 0
  \end{array}
\right)
 \biggl)
  \end{array}\label{eq:(1,2)-VHS-embedding} 
\end{equation}
then, the isomorphisms
\[
\begin{array}{r c l}
 F^k_{d_1} & \xrightarrow{\quad \cong \quad} & \sN_{\sg_H(k)}(2,1,\td_1,\td_2)\\
 & & \\
 \biggl(E_1 \oplus E_2, 
\left(
  \begin{array}{c c}
   0 & 0\\
   \varphi^{k}_{21} & 0
  \end{array}
\right)
 \biggl) &
\longmapsto &
(V_1,V_2,\varphi)
\end{array}
\]
between $(1,2)$ critical submanifolds and moduli spaces of triples, where we denote 
by $V_1 = E_2 \ox K(kp)$, 
by $V_2 = E_1$, 
by $\varphi = \varphi^{k}_{21}$  
and $\sg_H(k) = \deg(K(kp)) = 2g - 2 + k$, 
induce another embeddings:
\[
i_k: 
\sN_{\sg_H(k)}(2,1,\td_1,\td_2) 
\to
\sN_{\sg_H(k+1)}(2,1,\td_1+2,\td_2)
\]
\[
(V_1,V_2,\varphi) 
\mapsto 
(V_1 \ox L_p,V_2,\varphi \ox s_p)
\]
where $\td_1 = \deg(V_1) = d_2 + 2\sg_H(k)$ 
and $\td_2 = \deg(V_2) = d_1$, and so,
induce embeddings on the flips:
\[
i_k: \sN_{\sg_H^{-}(k)}(2,1,\td_1,\td_2) 
\hookrightarrow
\sN_{\sg_H^{-}(k+1)}(2,1,\td_1+2,\td_2)
\]
and
\[
i_k: \sN_{\sg_H^{+}(k)}(2,1,\td_1,\td_2)
\hookrightarrow
\sN_{\sg_H^{+}(k+1)}(2,1,\td_1+2,\td_2).
\]

\noindent The situation with critical manifolds of type $(2,1)$
\begin{equation}
  \begin{array}{r c l}
 F^k_{d_2} & \xrightarrow{\quad i_k \quad} & F^{k+1}_{d_2}\\
 & & \\
 \biggl(E_2 \oplus E_1, 
\left(
  \begin{array}{c c}
   0 & 0\\
   \varphi^{k}_{21} & 0
  \end{array}
\right)
 \biggl) &
\longmapsto &
 \biggl(E_2 \oplus E_1, 
\left(
  \begin{array}{c c}
   0 & 0\\
   \varphi^{k}_{21} \ox s_p & 0
  \end{array}
\right)
 \biggl)
  \end{array}\label{eq:(2,1)-VHS-embedding} 
\end{equation}
is very similar to the $(1,2)$ situation, using now isomorphisms
\[
\begin{array}{r c l}
 F^k_{d_2} & \xrightarrow{\quad \cong \quad} & \sN_{\sg_H(k)}(1,2,\td_1,\td_2)\\
 & & \\
 \biggl(E_2 \oplus E_1, 
\left(
  \begin{array}{c c}
   0 & 0\\
   \varphi^{k}_{21} & 0
  \end{array}
\right)
 \biggl) &
\longmapsto &
(V_1,V_2,\varphi)
\end{array}
\]
where
by $V_1 = E_1 \ox K(k\cdot p)$, 
by $V_2 = E_2$, 
by $\varphi = \varphi^{k}_{21}$  
and $\sg_H(k) = \deg\big(K(k\cdot p)\big) = 2g - 2 + k$, 
and the induced embeddings become:
\[
i_k: 
\sN_{\sg_H(k)}(1,2,\td_1,\td_2) 
\to
\sN_{\sg_H(k+1)}(1,2,\td_1 + 1,\td_2)
\]
\[
(V_1,V_2,\varphi) 
\mapsto 
(V_1 \ox L_p,V_2,\varphi \ox s_p)
\]
where $\td_1 = \deg(V_1) = d_1 + \sg_H(k)$ 
and $\td_2 = \deg(V_2) = d_2$. Hence, for the flips on $\sg_H(k)$, the induced embeddings become:
\[
i_k\: \sN_{\sg_H^{-}(k)}(1,2,\td_1,\td_2) 
\hookrightarrow
\sN_{\sg_H^{-}(k+1)}(1,2,\td_1 + 1,\td_2)
\]
and
\[
i_k: \sN_{\sg_H^{+}(k)}(1,2,\td_1,\td_2)
\hookrightarrow
\sN_{\sg_H^{+}(k+1)}(1,2,\td_1 + 1,\td_2).
\]

\noindent Critical submanifolds of type $(1,1,1)$ are different from the other two. The embeddings
\begin{equation}
  \begin{array}{r c l}
 F^k_{m_1 m_2} & \xrightarrow{\quad i_k \quad} & F^{k+1}_{m_1 m_2}\\
 & & \\
 (
 L_1\oplus L_2\oplus L_3,
 \left(
 \begin{array}{c c c}
     0       &     0       & 0\\ 
 \varphi_{1} &     0       & 0\\
     0       & \varphi_{2} & 0
 \end{array}
 \right)
 ) &
\longmapsto &
 (
 L_1\oplus L_2\oplus L_3,
 \left(
 \begin{array}{c c c}
     0       &     0       & 0\\ 
 \varphi_{1}\ox s_p &     0       & 0\\
     0       & \varphi_{2}\ox s_p & 0
 \end{array}
 \right)
 )
  \end{array}\label{eq:(1,1,1)-VHS-embedding} 
\end{equation}
together with the isomorphisms
\[
F_{m_1 m_2}^{k} \cong 
\Sym^{\bar{m}_1+k}(X) \times \Sym^{\bar{m}_2+k}(X) \times \sJ^{d_3}(X)
\]
induce embeddings of the form:
{\footnotesize
\[
\begin{array}{r c l}
 \Sym^{\bar{m}_1 + k}(X)\times \Sym^{\bar{m}_2 + k}(X)\times \sJ^{d_3}(X) & \to & \Sym^{\bar{m}_1 + k + 1}(X)\times \Sym^{\bar{m}_2 + k + 1}(X)\times \sJ^{d_3}(X)\\
 \big(\divv(\varphi^k_1),\divv(\varphi^k_2),L_3\big) & \mapsto & \big(\divv(\varphi^k_1 + p),\divv(\varphi^k_2 + p),L_3\big).
\end{array}
\]
}

\section[Triples and Roof Theorem]{Stable Holomorphic Triples and Roof Theorem}
\label{sec:2}

\subsection[sigma-Stability]{$\sg$-Stability}
\label{ssec:2.1}

For $(2,1,\td_2,\td_1)$-triples and $(1,2,\td_1,\td_2)$-triples, the embeddings $i_k$ preserve $\sg$-stability:

\begin{Lem}\label{[GoZR3]}
 A triple $T$ of type $(2,1,\td_2,\td_1)$ or type $(1,2,\td_1,\td_2)$ is $\sg$-stable $\Leftrightarrow i_k(T)$ is $(\sg + 1)$-stable.
\end{Lem}

\begin{proof}
We will show the result holds for $(2,1,\td_2,\td_1)$-triples, the proof of $(1,2,\td_1,\td_2)$-triples is analogous.

\noindent Recall that $T = (V_1,V_2,\varphi)$ is $\sg$-stable if and only if $\mu_{\sg}(T') < \mu_{\sg}(T)$ for any $T'$ proper subtriple of $T$.

\noindent Denote by $S = i_k(T) = (V_1 \ox L_p,V_2,\varphi \ox s_p)$. Is easy to check that $\mu_{\sg + 1}(S) = \mu_{\sg}(T) + 1$:
\[
\mu_{\sg+1}(S) =
\frac{\deg_{\sg + 1}(S)}{\rk(V_1\ox L_p) \oplus \rk(V_2)} =
\]
\[
\frac{\deg(V_1\ox L_p) + \deg(V_2) + (\sg + 1)\rk(V_2)}{1 + 2} =
\]
\[
\frac{\deg(V_1) + \deg(L_p) + \deg(V_2) + \sg \rk(V_2) + \rk(V_2)}{3} =
\]
\[
\frac{\deg(V_1) + \deg(V_2) + \sg \rk(V_2)}{3} +  
\frac{\deg(L_p) + \rk(V_2)}{3} =
\mu_{\sg}(T) + 1
\]
since $\deg(L_p) = 1$ and $\rk(V_2) = 2$.

\noindent Any $S'$ proper subtriple of $S$ is of the form $S' = i_k(T')$ for some $T'$ subtriple of $T$, or equivalently:
\[
S' = (V'_1 \ox L_p,V'_2,\varphi \ox s_p)
\]
and there are injective sheaf homomorphisms $V'_1 \rightarrow V_1$ and $V'_2 \rightarrow V_2$. This 
statement is justified since the following diagram commutes:
\[
  \begin{xy}
    (0,0)*+{S}="a";
    (20,0)*+{S'}="b";
    (40,0)*+{T'}="c";
    (60,0)*+{T}="d";
    (0,-10)*+{V_2}="e";
    (20,-10)*+{B}="f";
    (40,-10)*+{B}="g";
    (60,-10)*+{V_2}="h";
    (-10,-30)*+{V_1 \ox L_p}="i";
    (20,-30)*+{A}="j";
    (50,-30)*+{A \ox L_{p}^{*}}="k";
    (80,-30)*+{V_1}="l";
    {\ar@{->>}^{\supsetneq} "a";"b"};
    {\ar@{^{(}->}^{\subsetneq} "c";"d"};
    {\ar@{->} "b";"c"};
    {\ar@{|->}^{\supsetneq} "e";"f"};
    {\ar@{|->}^{\subsetneq} "g";"h"};
    {\ar@{|->}^{=} "f";"g"};
    {\ar@{|->}^{\supsetneq} "i";"j"};
    {\ar@{|->}^{\subsetneq} "k";"l"};
    {\ar@{|->} "j";"k"};
    {\ar@{->}_{\varphi \ox s_p} "e";"i"};
    {\ar@{->}_{\varphi \ox s_p} "f";"j"};
    {\ar@{->}^{(\varphi \ox s_p) \ox s_p^{-1}} "g";"k"};
    {\ar@{->}^{\varphi} "h";"l"};
  \end{xy}
  \]  
where the first floor of the diagram contains the first entries of the triples, second floor contains the second entries, 
the diagonal arrows are the coresponding morphisms, and we consider the subbundles $A = V'_1 \ox L_p,\ B = V'_2$ and 
$T' = (V'_1,V'_2,\varphi) \subseteq (V_1,V_2,\varphi) = T$. So, there is a one--to--one correspondence between the proper 
subtriples $S' \subseteq S$ and the proper subtriples $T' \subseteq T$. We can easily see that $\mu_{\sg+1}(S') = \mu_{\sg}(T')+1$ and hence:
\[
\mu_{\sg+1}(S') < \mu_{\sg+1}(S) 
\Leftrightarrow 
\mu_{\sg}(T')+1 < \mu_{\sg}(T)+1 
\Leftrightarrow 
\mu_{\sg}(T') < \mu_{\sg}(T).
\]
Therefore, $T$ is $\sg$-stable $\Leftrightarrow S = i_k(T)$ is $(\sg + 1)$-stable. 
\end{proof}

\begin{Cor}\label{(2,1)-embedding}
 The embedding
 $$
 i_k: \sN_{\sg(k)}(2,1,\td_1,\td_2)
 \to 
 \sN_{\sg(k+1)}(2,1,\td_1+2,\td_2)
 $$
is well defined for any $\sg(k)$ such that $
\sg_m < \sg(k) < \sg_M$. In particular, 
the embedding $i_k$ restricted to $F_{d_1}^{k}$ 
(see (\ref{eq:(1,2)-VHS-embedding})) is well defined and 
we have a commutative diagram of the form:
 \begin{align*}
  \begin{xy}
    (0,0)*+{(\tilde{E}_1,\tilde{E}_2,\varphi_{21}^{k})}="a0";
    (0,10)*+{\sN_{\sg_H(k)}}="a1";
    (0,40)*+{F_{d_1}^{k}}="a2";
    (0,50)*+{(E_1 \oplus E_2, \varphi^k)}="a3";
    (50,0)*+{(\tilde{E}_1,\tilde{E}_2,\varphi_{21}^{k}\ox s_p),}="b0";
    (50,10)*+{\sN_{\sg_H(k+1)}}="b1";
    (50,40)*+{F_{d_1}^{k+1}}="b2";
    (50,50)*+{(E_1 \oplus E_2, \varphi^k \ox s_p)}="b3";
    {\ar@{<->}^{\cong} "a1";"a2" **\dir{--}};
    {\ar@{<->}_{\cong} "b1";"b2" **\dir{--}};
    {\ar@{->}_{i_k} "a1";"b1" **\dir{--}};
    {\ar@{->}^{i_k} "a2";"b2" **\dir{--}};
    {\ar@{|->} "a0";"b0" **\dir{--}};
    {\ar@{|->} "a3";"b3" **\dir{--}};
  \end{xy}
 \end{align*}
 where $\tilde{E}_1 = E_2 \ox K(k\cdot p)$, $\tilde{E}_2 = E_1$, and 
 $\varphi_{21}^{k}: E_1 \to E_2 \ox K(k\cdot p)$. \QEDA
\end{Cor}

\begin{Cor}\label{(1,2)-embedding}
 The embedding
 \[
 i_k: \sN_{\sg(k)}(1,2,\td_2,\td_1)
 \to 
 \sN_{\sg(k+1)}(1,2,\td_2 + 1,\td_1)
 \]
is well defined for any $\sg(k)$ such that $
\sg_m < \sg(k) < \sg_M$. In particular, 
the embedding $i_k$ restricted to $F_{d_2}^{k}$ 
(see (\ref{eq:(2,1)-VHS-embedding})) is well defined and 
we have a commutative diagram of the form:
 \begin{align*}
  \begin{xy} 
    (0,0)*+{(\tilde{E}_1,\tilde{E}_2,\varphi_{21}^{k})}="a0";
    (0,10)*+{\sN_{\sg_H(k)}}="a1";
    (0,40)*+{F_{d_2}^{k}}="a2";
    (0,50)*+{(E_2 \oplus E_1, \varphi^k)}="a3";
    (50,0)*+{(\tilde{E}_1,\tilde{E}_2,\varphi_{21}^{k}\ox s_p),}="b0";
    (50,10)*+{\sN_{\sg_H(k+1)}}="b1";
    (50,40)*+{F_{d_2}^{k+1}}="b2";
    (50,50)*+{(E_2 \oplus E_1, \varphi^k \ox s_p)}="b3";
    {\ar@{<->}^{\cong} "a1";"a2" **\dir{--}};
    {\ar@{<->}_{\cong} "b1";"b2" **\dir{--}};
    {\ar@{->}_{i_k} "a1";"b1" **\dir{--}};
    {\ar@{->}^{i_k} "a2";"b2" **\dir{--}};
    {\ar@{|->} "a0";"b0" **\dir{--}};
    {\ar@{|->} "a3";"b3" **\dir{--}};
  \end{xy}
 \end{align*}
 where $\tilde{E}_1 = E_1 \ox K(k\cdot p)$, $\tilde{E}_2 = E_2$, and 
 $\varphi_{21}^{k}: E_2 \to E_1 \ox K(k\cdot p)$. \QEDA
\end{Cor}

\noindent These results allow us to conclude that there is an interesting and important correspondence between the $\sg$-stability 
values of moduli spaces of holomorphic triples:
\begin{align*}
 \begin{xy}
  (0,35)*+{\sg_m(k)}="a1";
  (40,35)*+{\sg_H(k)}="b1";
  (80,35)*+{\sg_M(k)}="e1";
  (0,30)*+{|}="a2";
  (40,30)*+{*}="b2";
  (65,30)*+{|}="c2";
  (70,30)*+{\cdot}="d2";
  (75,30)*+{\cdot}="e2";
  (80,30)*+{|}="f2";
  (100,30)*+{}="g2";
  (0,5)*+{\cdot}="a3";
  (5,5)*+{|}="b3";
  (40,5)*+{\cdot}="c3";
  (45,5)*+{*}="d3";
  (70,5)*+{|}="e3";
  (75,5)*+{\cdot}="f3";
  (80,5)*+{\cdot}="g3";
  (85,5)*+{|}="h3";
  (90,5)*+{\cdot}="i3";
  (95,5)*+{\cdot}="j3";
  (100,5)*+{|}="k3";
  (120,5)*+{}="l3";
  (5,0)*+{\sg_m(k+1)}="a4";
  (45,0)*+{\sg_H(k+1)}="b4";
  (85,0)*+{\sg'}="c4";
  (100,0)*+{\sg_M(k+1)}="e4";
  {\ar@{-} "a2";"b2"};
  {\ar@{-} "b2";"c2"};
  {\ar@{-} "c2";"d2"};
  {\ar@{-} "d2";"e2"};
  {\ar@{-} "e2";"f2"};
  {\ar@{->} "f2";"g2"};
  {\ar@{->}^{i_k} "a2";"b3"};
  {\ar@{->}^{i_k} "b2";"d3"};
  {\ar@{->}^{i_k} "f2";"h3"};
  {\ar@{->}^{i_k} "c2";"e3"};
  {\ar@{-->} "f2";"k3"};
  {\ar@{-} "a3";"b3"};
  {\ar@{-} "b3";"c3"};
  {\ar@{-} "c3";"d3"};
  {\ar@{-} "d3";"e3"};
  {\ar@{-} "e3";"f3"};
  {\ar@{-} "f3";"g3"};
  {\ar@{-} "g3";"h3"};
  {\ar@{-} "h3";"i3"};
  {\ar@{-} "i3";"j3"};
  {\ar@{-} "j3";"k3"};
  {\ar@{->} "k3";"l3"};
  \end{xy}
\end{align*}
where $\sg_m(k) = \tilde{\mu}_1 - \tilde{\mu}_2$, 
$\sg_M(k) = 4(\tilde{\mu}_1 - \tilde{\mu}_2)$, 
$\sg_H(k) = \deg(K(kp)) = 2g - 2 + k$, 
and the correspondence gives us
$\sg_m(k+1) = \sg_m(k) + 1$, 
$\sg' = \sg_M(k) + 1$, 
$\sg_M(k+1) = \sg_M(k)+3$, and 
$\sg_H(k+1) = \sg_H(k) + 1$. First and second floor are representations of the real line, where the second floor of the diagram corresponds to the interval $[\sg_m,\sg_M]$ for poles of order $k$, and the first floor for poles of order $(k + 1)$ after the embedding $i_k$.

\begin{Rmk}
 An interesting fact from the correspondence represented by last diagram is that 
 \[
  i_k\:
  \sg_M(k) \mapsto \sg' < \sg_M(k+1).
 \]
\end{Rmk}

\subsection[Blow-up and The Roof Theorem]{Blow-up and The Roof Theorem}
\label{ssec:2.2}

At this point, a brief description of the flip loci of the moduli spaces of holomorphic triples will be useful to understand the coming results and notation. The reader may see Mu\~noz~et~al.~\cite{mov} for details.

\noindent Fixing the type $(r_1,r_2,d_1,d_2)$ for the moduli spaces of holomorphic triples, we shall describe the differences between $\sN_{\sg_1}(r_1,r_2,d_1,d_2)$ and $\sN_{\sg_2}(r_1,r_2,d_1,d_2)$ where $\sg_1$ and $\sg_2$ are separated by a critical value $\sg_c \in [\sg_m,\sg_M]$. Here, we suppose $r_1 \neq r_2$, since for our purposes, the case $r_1 = 2$ and $r_2 = 1$ will be particularly useful. 

\noindent Let 
\[
 \sg_{c}^{+} = \sg_c + \varepsilon
 \word{and}
 \sg_{c}^{-} = \sg_c - \varepsilon
\]
where $\varepsilon > 0$ is small enough so that $\sg_c \in \ ]\sg_{c}^{-},\sg_{c}^{+}[$ is the only critical value in that subinterval.

\begin{Def}\label{fliploci}
 Define the {\em flip loci} as the sets
 \[
  S_{\sg_{c}^{+}} =
  \left\{ 
  T\in \sN_{\sg_{c}^{+}} |\ T \word{is} \sg_{c}^{-}-\textmd{unstable}
  \right\}
  \sse \sN_{\sg_{c}^{+}}(r_1,r_2,d_1,d_2)
 \]
and
 \[
  S_{\sg_{c}^{-}} =
  \left\{ 
  T\in \sN_{\sg_{c}^{-}} |\ T \word{is} \sg_{c}^{+}-\textmd{unstable}
  \right\}
  \sse \sN_{\sg_{c}^{-}}(r_1,r_2,d_1,d_2),
 \]
and denote 
$
S_{\sg_{c}^{\pm}}^{s} =
S_{\sg_{c}^{\pm}} \cap 
\sN_{\sg_{c}^{\pm}}^{s}(r_1,r_2,d_1,d_2)
$
as the {\em stable part of the flip loci}, where $\sg_{c}^{\pm}$ means any of both $\sg_{c}^{+}$ or $\sg_{c}^{-}$.
\end{Def}

\noindent Denote $\tilde{\sN}_{\sg_c^{-}(k)}$ as the blow-up of $\sN_{\sg_c^{-}(k)} = \sN_{\sg_c^{-}(k)}(2,1,\td_1,\td_2)$ 
along the flip locus $S_{\sg_c^{-}(k)}$, which is isomorphic to $\tilde{\sN}_{\sg_c^{+}(k)}$, the blow-up of 
$\sN_{\sg_c^{+}(k)} = \sN_{\sg_c^{+}(k)}(2,1,\td_1,\td_2)$ along the flip locus $S_{\sg_c^{+}(k)}$. From now on, we 
will denote just $\tilde{\sN}_{\sg_c(k)}$ whenever no confusion is likely to arise. 

\begin{Th}\label{RoofTheorem}
 For each $k$, there exists an embedding at the blow-up level 
 \[
  \tilde{i_k}: \tilde{\sN}_{\sg_c(k)} \hookrightarrow \tilde{\sN}_{\sg_c(k+1)}
 \]
 such that the following diagram commutes: 
 \begin{align*}
 \begin{xy}
(0,-20)*+{\sN_{\sg_c^{-}(k+1)}}="a";
(20,0)*+{\tilde{\sN}_{\sg_c(k+1)}}="b";
(10,-50)*+{\tilde{\sN}_{\sg_c(k)}}="c";
(40,-20)*+{\sN_{\sg_c^{+}(k+1)}}="d";
(-10,-70)*+{\sN_{\sg_c^{-}(k)}}="e";
(30,-70)*+{\sN_{\sg_c^{+}(k)}}="f";
{\ar@{-->}^(.45){\exists \tilde{i_k}} "c";"b" **\dir{--}};
{\ar^(.45){} "b";"a" **\dir{-}};
{\ar^(.45){i_k} "e";"a" **\dir{-}};
{\ar^(.45){} "c";"e" **\dir{-}};
{\ar_(.45){i_k} "f";"d" **\dir{-}};
{\ar^(.45){} "c";"f" **\dir{-}};
{\ar^(.45){} "b";"d" **\dir{-}};
 \end{xy}
 \end{align*}
where $\tilde{\sN}_{\sg_c(k)}$ is the blow-up of 
$\sN_{\sg_c^{-}(k)} = \sN_{\sg_c^{-}(k)}(2,1,\td_1,\td_2)$ along the flip locus $S_{\sg_c^{-}(k)}$ 
and, at the same time, represents the blow-up of
$\sN_{\sg_c^{+}(k)} = \sN_{\sg_c^{+}(k)}(2,1,\td_1,\td_2)$ along the flip locus $S_{\sg_c^{+}(k)}$.
\end{Th}

\begin{proof}
Recall that $T$ is $\sg$-stable if and only if $i_k(T)$ is $(\sg+1)$-stable. Furthermore, by~\cite{mov}, note that any triple 
\[
T = (V_1,V_2,\varphi)\in S_{\sg_c^{+}(k)} \subseteq \sN_{\sg_c^{+}(k)}(2,1,\td_1,\td_2) 
\]
is a non-trivial extension of a subtriple $T' \subseteq T$ of the form $T' = (V'_1,V'_2,\varphi') = (M,0,\varphi')$ by a quotient triple of 
the form $T'' = (V''_1,V''_2,\varphi'') = (L,V_2,\varphi'')$, where $M$ is a line bundle of degree $\deg(M) = d_M$
and $L$ is a line bundle of degree $\deg(L) = d_L = \td_1 - d_M$. Besides, also by ~\cite{mov}, the non-trivial critical 
values $\sg_c \neq \sg_m$ for $\sg_m < \sg_c < \sg_M$ are of the form $\sg_c = 3d_M - \td_1 - \td_2$. Then, 
we can visualize the embedding $i_k: T \hookto i_k(T)$ as follows:
\begin{align*}
 \begin{xy}
  (0,0)*+{0}="a1";
  (20,0)*+{T'}="b1";
  (40,0)*+{T}="c1";
  (60,0)*+{T''}="d1";
  (80,0)*+{0}="e1";
  (0,-10)*+{0}="a2";
  (20,-10)*+{0}="b2";
  (40,-10)*+{V_2}="c2";
  (60,-10)*+{V_2}="d2";
  (80,-10)*+{0}="e2";
  (-20,-30)*+{0}="a3";
  (0,-30)*+{M}="b3";
  (40,-30)*+{V_1}="c3";
  (80,-30)*+{L}="d3";
  (100,-30)*+{0}="e3";
  (0,-50)*+{0}="a4";
  (20,-50)*+{0}="b4";
  (40,-50)*+{V_2}="c4";
  (60,-50)*+{V_2}="d4";
  (80,-50)*+{0}="e4";
  (-20,-70)*+{0}="a5";
  (0,-70)*+{M\ox L_p}="b5";
  (40,-70)*+{V_1\ox L_p}="c5";
  (80,-70)*+{L\ox L_p}="d5";
  (100,-70)*+{0}="e5";
  {\ar@{->} "a1";"b1"};
  {\ar@{->} "b1";"c1"};
  {\ar@{->} "c1";"d1"};
  {\ar@{->} "d1";"e1"};
  {\ar@{->} "a2";"b2"};
  {\ar@{->} "b2";"c2"};
  {\ar@{->}^{=} "c2";"d2"};
  {\ar@{->} "d2";"e2"};
  {\ar@{->}_{\varphi'} "b2";"b3"};
  {\ar@{->}_{\varphi} "c2";"c3"};
  {\ar@{->}_{\varphi''} "d2";"d3"};
  {\ar@{->} "a3";"b3"};
  {\ar@{->} "b3";"c3"};
  {\ar@{->} "c3";"d3"};
  {\ar@{->} "d3";"e3"};
  {\ar@{|->}^{i_k} "c3";"c4"};
  {\ar@{->} "a4";"b4"};
  {\ar@{->} "b4";"c4"};
  {\ar@{->}^{=} "c4";"d4"};
  {\ar@{->} "d4";"e4"};
  {\ar@{->}_{\varphi' \ox s_p} "b4";"b5"};
  {\ar@{->}_{\varphi \ox s_p} "c4";"c5"};
  {\ar@{->}_{\varphi'' \ox s_p} "d4";"d5"};
  {\ar@{->} "a5";"b5"};
  {\ar@{->} "b5";"c5"};
  {\ar@{->} "c5";"d5"};
  {\ar@{->} "d5";"e5"};
 \end{xy}
\end{align*}

where $\deg(V_1 \ox L_p) = \td_1 + 2$ and $\deg(M \ox L_p) = d_M + 1$, and so $L \ox L_p$ 
verifies that $\deg(L \ox L_p) = \deg(V_1 \ox L_p) - \deg(M \ox L_p)$:
$$
\deg(L \ox L_p) = 
d_L + 1 = 
\td_1 - d_M + 1 = 
$$
$$
(\td_1 + 2) - (d_M + 1) = 
\deg(V_1 \ox L_p) - \deg(M \ox L_p).
$$
Hence, $\sg_c(k+1)$ verifies that $\sg_c(k+1) = \sg_c(k) + 1$:
$$\sg_c(k+1) = 3\deg(M \ox L_p) - \deg(V_1 \ox L_p) - \deg(V_2) = $$
$$3d_M + 3 - \td_1 - 2 - \td_2 = (3d_M - \td_1 - \td_2) + 1 = \sg_c(k) + 1$$
and where $i_k(T') = (M \ox L_p,0,\varphi' \ox s_p)$ is the maximal $\sg_c^{+}(k+1)$-destabilizing subtriple of $i_k(T)$,
verifying exactness at the image level of the embedding.

\noindent Similarly, also by~\cite{mov}, any triple $T \in S_{\sg_c^{-}(k)} \subseteq \sN_{\sg_c^{-}(k)}(2,1,\td_1,\td_2)$
is a non-trivial extension of a subtriple $T' \subseteq T$ of the form $T' = (V'_1,V'_2,\varphi') = (L,V_2,\varphi')$
by a quotient triple of the form $T'' = (V''_1,V''_2,\varphi'') = (M,0,\varphi'')$, where $M$ is a line bundle 
of degree $\deg(M) = d_M$ and $L$ is a line bundle of degree $\deg(L) = d_L = \td_1 - d_M$. 
Then, the embedding 
$$
i_k: T \hookto i_k(T)
$$ 
looks like:
\begin{align*}
 \begin{xy}
  (0,0)*+{0}="a1";
  (20,0)*+{T'}="b1";
  (40,0)*+{T}="c1";
  (60,0)*+{T''}="d1";
  (80,0)*+{0}="e1";
  (0,-10)*+{0}="a2";
  (20,-10)*+{V_2}="b2";
  (40,-10)*+{V_2}="c2";
  (60,-10)*+{0}="d2";
  (80,-10)*+{0}="e2";
  (-20,-30)*+{0}="a3";
  (0,-30)*+{L}="b3";
  (40,-30)*+{V_1}="c3";
  (80,-30)*+{M}="d3";
  (100,-30)*+{0}="e3";
  (0,-50)*+{0}="a4";
  (20,-50)*+{V_2}="b4";
  (40,-50)*+{V_2}="c4";
  (60,-50)*+{0}="d4";
  (80,-50)*+{0}="e4";
  (-20,-70)*+{0}="a5";
  (0,-70)*+{L\ox L_p}="b5";
  (40,-70)*+{V_1\ox L_p}="c5";
  (80,-70)*+{M\ox L_p}="d5";
  (100,-70)*+{0}="e5";
  {\ar@{->} "a1";"b1"};
  {\ar@{->} "b1";"c1"};
  {\ar@{->} "c1";"d1"};
  {\ar@{->} "d1";"e1"};
  {\ar@{->} "a2";"b2"};
  {\ar@{->}^{=} "b2";"c2"};
  {\ar@{->} "c2";"d2"};
  {\ar@{->} "d2";"e2"};
  {\ar@{->}_{\varphi'} "b2";"b3"};
  {\ar@{->}_{\varphi} "c2";"c3"};
  {\ar@{->}_{\varphi''} "d2";"d3"};
  {\ar@{->} "a3";"b3"};
  {\ar@{->} "b3";"c3"};
  {\ar@{->} "c3";"d3"};
  {\ar@{->} "d3";"e3"};
  {\ar@{|->}^{i_k} "c3";"c4"};
  {\ar@{->} "a4";"b4"};
  {\ar@{->}^{=} "b4";"c4"};
  {\ar@{->} "c4";"d4"};
  {\ar@{->} "d4";"e4"};
  {\ar@{->}_{\varphi' \ox s_p} "b4";"b5"};
  {\ar@{->}_{\varphi \ox s_p} "c4";"c5"};
  {\ar@{->}_{\varphi'' \ox s_p} "d4";"d5"};
  {\ar@{->} "a5";"b5"};
  {\ar@{->} "b5";"c5"};
  {\ar@{->} "c5";"d5"};
  {\ar@{->} "d5";"e5"};
 \end{xy}
\end{align*}
where $i_k(T') = (L,V_2,\varphi')$ is the maximal $\sg_c^{+}(k+1)$-destabilizing subtriple of $i_k(T)$.

\noindent Hence, $i_k$ restricts to the flip loci $S_{\sg_c^{+}(k)}$ and $S_{\sg_c^{-}(k)}$. Recall that, by definition, the blow-up of
$\sN_{\sg_c^{+}(k)}$ along the flip locus $S_{\sg_c^{+}(k)}$, is the space $\tilde{\sN}_{\sg_c(k)}$ together 
with the projection 
\[
\pi:\ \tilde{\sN}_{\sg_c(k)} \rightarrow \sN_{\sg_c^{+}(k)} 
\]
where $\pi$ restricted to $\sN_{\sg_c^{+}(k)} - S_{\sg_c^{+}(k)}$ is an isomorphism and the \emph{exceptional divisor} 
$\sE^{+} = \pi^{-1}(S_{\sg_c^{+}(k)}) \subseteq \tilde{\sN}_{\sg_c(k)}$ is a fiber bundle over $S_{\sg_c^{+}(k)}$ 
with fiber $\mathbb{P}^{n-k-1}$, where $n = \mathrm{dim}(\sN_{\sg_c^{+}(k)})$ and $k = \mathrm{dim}(S_{\sg_c^{+}(k)})$. So, 
the embedding can be extended to $\sE^{+}$ in a natural way. Same argument remains valid when we consider 
$\tilde{\sN}_{\sg_c(k)}$ as the blow-up of $\sN_{\sg_c^{-}(k)}$ along the flip locus $S_{\sg_c^{-}(k)}$ with 
exceptional divisor $\sE^{-} = \pi^{-1}(S_{\sg_c^{-}(k)}) \subseteq \tilde{\sN}_{\sg_c(k)}$. Therefore, the embedding 
can be extended to the whole $\tilde{\sN}_{\sg_c(k)}$. 
\end{proof}

\noindent Recall that there is an isomorphism
 \[
  \sN_{\sg}(1,2,d_1,d_2)
  \cong
  \sN_{\sg}(2,1,-d_2,-d_1)
 \]
for all $\sg$ by Proposition~\ref{dualtriples}. Hence, the following corollary represents the analogous dual Roof-Theorem for the $(1,2)$-case, and also holds:

\begin{Cor}\label{(1,2)-RoofTheorem}
 For each $k$, there exists an embedding at the blow-up level 
 \[
  \tilde{i_k}: \tilde{\sN}_{\sg_c(k)} \hookrightarrow \tilde{\sN}_{\sg_c(k+1)}
 \]
 such that the following diagram commutes: 
 \begin{align*}
 \begin{xy}
(0,-20)*+{\sN_{\sg_c^{-}(k+1)}}="a";
(20,0)*+{\tilde{\sN}_{\sg_c(k+1)}}="b";
(10,-50)*+{\tilde{\sN}_{\sg_c(k)}}="c";
(40,-20)*+{\sN_{\sg_c^{+}(k+1)}}="d";
(-10,-70)*+{\sN_{\sg_c^{-}(k)}}="e";
(30,-70)*+{\sN_{\sg_c^{+}(k)}}="f";
{\ar@{-->}^(.45){\exists \tilde{i_k}} "c";"b" **\dir{--}};
{\ar^(.45){} "b";"a" **\dir{-}};
{\ar^(.45){i_k} "e";"a" **\dir{-}};
{\ar^(.45){} "c";"e" **\dir{-}};
{\ar_(.45){i_k} "f";"d" **\dir{-}};
{\ar^(.45){} "c";"f" **\dir{-}};
{\ar^(.45){} "b";"d" **\dir{-}};
 \end{xy}
 \end{align*}
where $\tilde{\sN}_{\sg_c(k)}$ is the blow-up of 
$\sN_{\sg_c^{-}(k)} = \sN_{\sg_c^{-}(k)}(1,2,\td_2,\td_1)$ along the flip locus $S_{\sg_c^{-}(k)}$ 
and, at the same time, represents the blow-up of
$\sN_{\sg_c^{+}(k)} = \sN_{\sg_c^{+}(k)}(1,2,\td_2,\td_1)$ along the flip locus $S_{\sg_c^{+}(k)}$.
\end{Cor}

\begin{proof}
Follows from Theorem~\ref{RoofTheorem} and Proposition~\ref{dualtriples}.
\end{proof}

\begin{Rmk}
 The construction of the blow-up may be found in the book of Griffiths and Harris~\cite{grha}.
\end{Rmk}


\section[Cohomology]{Cohomology}
\label{sec:3}

We want to show that the embeddings
$
i_k\: F_{\lambda}^{k} \hookto F_{\lambda}^{k+1}
$ 
induce covariant isomorphisms in cohomology:
\[
H^{j}(F_{\lambda}^{k+1},\mathbb{Z}) 
\xrightarrow{\quad \cong \quad} 
H^{j}(F_{\lambda}^{k},\mathbb{Z}) 
\]
for all $\lambda$ and certain $j$. To do that, we need to study $F_{d_1}^k$, $F_{d_2}^k$ and $F_{m_1 m_2}^k$ separately. Because of 
Proposition \ref{dualtriples}, the cohomology of $F_{d_1}^k$ and $F_{d_2}^k$ are similar, so it will be enough to analyze $F_{d_1}^k$. The 
cohomology of $F_{m_1 m_2}^k$ will be completely different.
 
\noindent We shall start by describing the cohomology of $\Sym^{k}(X) = X^{k}/S_{k}$, the $k$-th symmetric product in subsection \ref{ssec:3.1}, which is related 
to the cohomology of the rank three VHS.

\noindent For $(1,2)$-VHS, we will prove that the embeddings
$
i_k\: F_{d_1}^{k} \hookto F_{d_1}^{k+1}
$ 
induce isomorphisms in cohomology:
\[
H^{j}(F_{d_1}^{k+1},\mathbb{Z}) 
\xrightarrow{\quad \cong \quad} 
H^{j}(F_{d_1}^{k},\mathbb{Z}) 
\]
for certain $j$, or equivalently:
\[
H^{j}(\sN_{\sg_H}^{k+1},\mathbb{Z}) 
\xrightarrow{\quad \cong \quad} 
H^{j}(\sN_{\sg_H}^{k},\mathbb{Z}), 
\]
where we denote 
$
\sN_{\sg_H}^{k} = 
\sN_{\sg_H(k)}(2,1,\td_1,\td_2)
$. We do that in two steps. First, in subsection \ref{ssec:3.2}, we get that 
\[
H^{j}(\sN_{\sg_c}^{k+1},\mathbb{Z}) 
\xrightarrow{\quad \cong \quad} 
H^{j}(\sN_{\sg_c}^{k},\mathbb{Z}) 
\]
for all critical $\sg_c = \sg_c(k)$ such that $\sg_m(k) < \sg_c(k) < \sg_M(k)$, and for 
all $j \leqslant n(k)$, where the bound $n(k)$ is known. We first analize the 
embedding restricted to the flip loci,
$
i_k: S_{\sg_c^-(k)} 
\hookrightarrow 
S_{\sg_c^-(k+1)}
$ 
and 
$
i_k: S_{\sg_c^+(k)} 
\hookrightarrow 
S_{\sg_c^+(k+1)}
$.
For simplicity, we will denote from now on 
$
S_{-}^{k} = 
S_{\sg_c^-(k)}
$ 
and 
$
S_{+}^{k} = 
S_{\sg_c^+(k)}
$ 
whenever no confusion is likely to arise about the critical value.

\noindent In subsection \ref{ssec:3.3}, we stabilize the cohomology of the $(1,2)$-VHS, using useful results from the work of {Bradlow,~Garc\'ia-Prada,~Gothen~\cite{bgg1}}. In subsection \ref{ssec:3.4}, we present the dual results for $(2,1)$-VHS. 

\noindent Finally, in subsection \ref{ssec:3.5}, we study the case of the $(1,1,1)$-VHS.

\subsection[Cohomology of Symmetric Products]{Cohomology of Symmetric Products}
\label{ssec:3.1}
%

\noindent We begin by recalling some cohomology features of $\Sym^{k}(X) = X^{k}/S_{k}$, the symmetric product with quotient topology, where $X^{k}$ is the 
$k$-times cartesian product and $S_{k}$ is the order $k$ symmetric group. Obviously $\Sym^{1}(X) = X$.

\noindent As mentioned before, the $k$-th symmetric product $\Sym^{k}(X)$ is a smooth projective variety of dimension $k\in \bN$, that could be 
interpretated as the moduli space of degree $k$ effective divisors.

\noindent It is well known that 
\[
 H^{0}(X,\bZ) = \bZ,\quad
 H^{1}(X,\bZ) = \bZ^{2g},\quad
 H^{2}(X,\bZ) = \bZ.
\]
There is a generator $\beta\in H^{2}(X,\bZ)$ induced by the orientation of $X$. Moreover, there are $2g$ generators 
$\al_1,\ \al_2,\ \dots,\ \al_{2g}\in H^{1}(X,\bZ)$ such that 
\[
 \al_{i} \cup \al_{j} = 
 -\al_{j} \cup \al_{i} = 
 0 \word{if} i - j \neq \pm g \word{for} i,j\in \{1,\ \dots, 2g\}
\]
and
\[
 \al_{i} \cup \al_{i + g} = 
 -\al_{i+g} \cup \al_{i} = 
 \beta \word{for} i\in \{1,\ \dots, g\}
\]
with the usual cup product $\cup$. Hence
\[
 \al_{i} \cup \beta = 
 \beta \cup \al_{i} = 0
 \word{and} 
 \beta^2 = \beta \cup \beta = 0.
\]

\noindent For the usual cartesian product $X^{k}$, we get that the ring $H^{*}(X^{k},\bZ) \cong H^{*}(X,\bZ)^{\ox k}$ is generated by
$\{\al_{ir}\}_{i=1}^{2g}$ and $\beta_{r}$ with $1\leq r \leq k$, which are elements of the form
\[
 \al_{ir} = 
 1\ox \dots \ox 1 \ox \al_{i} \ox 1 \ox \dots \ox 1 \in H^{1}(X^{k},\bZ)
\]
and
\[
 \beta_{r} = 
 1\ox \dots \ox 1 \ox \beta \ox 1 \ox \dots \ox 1 \in H^{2}(X^{k},\bZ)
\]
where $\al_{i}$ and $\beta$ fill the $r$-th entry of $\al_{ir}$ and $\beta_{r}$ respectively, and they are subject to the relations 
\[
 \al_{ir} \cup \al_{jr} = 
 -\al_{jr} \cup \al_{ir} = 
 0 \word{if} i - j \neq \pm g \word{for} i,j\in \{1,\ \dots, 2g\}
\]
and
\[
 \al_{ir} \cup \al_{i + g\ r} = 
 -\al_{i + g\ r} \cup \al_{ir} = 
 \beta_r \word{for} i\in \{1,\ \dots, g\}.
\]
Hence
\[
 \al_{ir} \cup \beta_{r} = \beta_{r} \cup \al_{ir} = 0
 \word{and} 
 \beta_{r}^2 = \beta_{r} \cup \beta_{r} = 0.
\]
Besides, each $\beta_{r}$ commutes with every element of $H^{*}(X^{k},\bZ)$.

\noindent Finally, the symmetric product $\Sym^{k}(X)$ has a cohomology ring $H^{*}(\Sym^{k}(X),\bZ)$ generated by elements of the form
\[
 \zeta_{i} =
 \al_{i1} + \dots + \al_{ik}
 = \sum_{r=1}^{k}\al_{ir}
 \in H^{1}(\Sym^{k}(X),\bZ)
 \word{for} 1\leq i \leq 2g
\]
and 
\[
 \eta =
 \beta_{1} + \dots + \beta_{k}
 = \sum_{r=1}^{k}\beta_{r}
 \in H^{2}(\Sym^{k}(X),\bZ)
\]
where
\[
\zeta_{i} \cup \zeta_{j}
=
-\zeta_{j} \cup \zeta_{i}
 \word{and}
 \zeta_{i} \cup \eta
=
\eta \cup \zeta_{i}
\]
for any $i$ and $j$. The reader may consult Macdonald~\cite{mac} or Arbarello-Cornalba-Griffiths-Harris~\cite{acgh} for details.

\noindent According to Arbarello~et al.~\cite{acgh}, there is $\bigtriangleup_{k}\in \Sym^{k+1}(X)$ 
a universal divisor such that 
\[
 \bigtriangleup_{k}\Big|_{\{D\}\times X} = D
 \word{for every divisor}
 D\in \Sym^{k}(X).
\]
Therefore, the first Chern class $c_1(\bigtriangleup_{k})\in H^{2}(\Sym^{k+1}(X),\bZ)$ of this universal divisor is given by
\begin{equation}\label{c1udiv}
 c_1(\bigtriangleup_{k})
 = \gamma \ox k +
 \sum_{i=1}^{g}(\zeta_{i} \ox \al_{i+g} - \zeta_{i + g} \ox \al_{i})
 + \eta \ox 1
 \in
 H^{2}(\Sym^{k+1}(X),\bZ)
\end{equation}
%
where 
\[
H^{2}(\Sym^{k+1}(X),\bZ) =
\sum_{j=0}^{2}H^{j}(\Sym^{k}(X),\bZ)\ox H^{2-j}(X,\bZ)
\]
and
\[
 \gamma =
 \sum_{i = 1}^{g}\zeta_{i}\cup \zeta_{i+g}
 \in H^{2}(\Sym^{k}(X),\bZ).
\]
Macdonald~\cite{mac} compute the Poincar\'e polynomial of $H^{*}(\Sym^{k}(X),\bZ)$:
\begin{equation}\label{poincaresymk}
 P_{t}\big(\Sym^{k}(X)\big) =
 \begin{array}{r}
  \word{Coeff}\\
  x^k
 \end{array}
 \left(
 \frac{(1 + xt)^{2g}}{(1 - x)(1 - xt^2)}
 \right).
\end{equation}

\noindent For $k > 2g - 2$ there is the Abel--Jacobi map $\Sym^k(X) \to \sJ^{k}$, which is a locally trivial fibration with fibre $\bP^{k - g}$, 
and gives the Poincar\'e polynomial:
\begin{equation}\label{poincaresymk2}
 P_{t}\big(\Sym^{k}(X)\big) =
 \left(
 \frac{(1 + t)^{2g}(1 + t^{2(k-g+1)})}{(1 - t^2)}
 \right).
\end{equation}

\noindent The reader may see Macdonald~\cite{mac}, Arbarello~et al.~\cite{acgh}, or Hausel~\cite{hau} for details.

\noindent Our embedding $i_k\: F_{\la}^{k}\to F_{\la}^{k+1}$ is in fact related to the embedding
\[
 \Sym^{k}(X)\to \Sym^{k+1}(X)
\]
\[
 D\mapsto D+p
\]
for a fixed point $p\in X$. We will abuse notation and call this last embedding also $i_k$. We get a sequence
\[
 X = 
 \Sym^{1}(X) \sse
 \Sym^{2}(X) \sse
 \dots \sse 
 \Sym^{k}(X) \sse
 \dots 
\]
and so, we may consider its direct limit 
\[
 \Sym^{\infty}(X) = 
 \lim_{k\to \infty}\Sym^{k}(X),
\]
which is a $\bP^{\infty}$-bundle over the Jacobian $\sJ$, and hence its Poincar\'e polynomial is:
\begin{equation}\label{poincaresyminfty}
 P_{t}\big(\Sym^{\infty}(X)\big) =
 \left(
 \frac{(1 + t)^{2g}}{(1 - t^2)}
 \right).
\end{equation}
The reader may consult Hausel~\cite{hau} for all the details.

\begin{Th}\label{symmetric-pullback}
 The pull-back
 \[
  i_{k}^{*}\:
  H^{*}(\Sym^{k+1}(X),\bZ)
  \to
  H^{*}(\Sym^{k}(X),\bZ)
 \]
induced by the embedding $i_{k}\: \Sym^{k}(X)\to \Sym^{k+1}(X)$, is surjective.
\end{Th}

\begin{proof}
 It is enough to see that the cohomology ring $H^{*}(\Sym^{k}(X),\bZ)$ is generated by the universal classes $\{\zeta_i\}_{i=1}^{g}$ and $\eta$ 
 mentioned above, and that the universal divisor $\bigtriangleup_{k}$ has first Chern class of the form~\ref{c1udiv}. See Hausel~\cite{hau} for details.
\end{proof}

\begin{Cor}\label{symmetric-directlimit}
 The cohomology ring of the direct limit $\Sym^{\infty}(X)$ is the covariant limit
 \[
  H^{*}(\Sym^{\infty}(X),\bZ)
  =
  \lim_{\infty \leftarrow k}
  H^{*}(\Sym^{k}(X),\bZ)
 \]
which is a graded commutative free algebra generated by the classes $\{\zeta_i\}_{i=1}^{g}$ and $\eta$. 
\end{Cor}

\begin{proof}
 This is a consequence of Theorem~\ref{symmetric-pullback} and the Poincar\'e polynomial~(\ref{poincaresyminfty}) found by Hausel~\cite{hau}.
\end{proof}

\begin{Th}[{\cite[(12.2)]{mac}}]\label{macdonald-12.2}
 There is a cohomology isomorphism
 \[
  H^{j}(\Sym^{k+1}(X),\bZ)
  \to
  H^{j}(\Sym^{k}(X),\bZ)
 \]
 for all $j \leq k-1$.\QEDA
\end{Th}

\begin{Cor}\label{symmetric-cohomology-iso}
 There is an isomorphism
 \[
  H^{j}(\Sym^{\infty}(X),\bZ)
  \to
  H^{j}(\Sym^{k}(X),\bZ)
 \]
 for all $j \leq k-1$.
\end{Cor}

\begin{proof}
 It follows directly from Theorem~\ref{symmetric-pullback}, Corollary~\ref{symmetric-directlimit} and Theorem~\ref{macdonald-12.2}.
\end{proof}

\subsection[Cohomology of Triples]{Cohomology of Triples}
\label{ssec:3.2}

A few words about notation. Recall that we are using $\td_j = \deg(V_j)$
because of the correspondence
\[
 V_1 = E_2 \ox K(k\cdot p)
 \word{and}
 V_2 = E_1
\]
through the isomorphism
$
 F_{d_1}^k \cong \sN_{\sg_H(k)}(2,1,\td_1,\td_2)
$
where
\[
 \td_1 = \deg(V_1) = \deg\big(E_2 \ox K(k\cdot p)\big) = d_2 + 2\sg_H(k)
 \word{and}
 \td_2 = \deg(V_2) = \deg(E_1) = d_1.
\]
 Similarly, the notation becomes
\[
 V_1 = E_1 \ox K(k\cdot p)
 \word{and}
 V_2 = E_2
\]
through the isomorphism
$
 F_{d_2}^k \cong \sN_{\sg_H(k)}(1,2,\td_1,\td_2)
$
for the dual cases, and so
\[
 \td_1 = \deg(V_1) = \deg(E_1 \ox K(k\cdot p)) = d_1 + \sg_H(k)
 \word{and}
 \td_2 = \deg(V_2) = \deg(E_2) = d_2.
\]

\begin{Th}\label{CohomologyNegativeFlipLocus}
 There is an isomorphism
 \[
  i_k^*: H^{j}(S_{-}^{k+1},\mathbb{Z}) 
 \xrightarrow{\quad \cong \quad} 
 H^{j}(S_{-}^{k},\mathbb{Z})
 \]
 for all 
 $
 j \leqslant \td_1 - d_M - \td_2 - 1 
 = d_2 - d_1 + 2\sg_H(k) - d_M
 $, 
 where $d_j = \deg(E_j)
 $,
 $\td_j = \deg(V_j)$, 
 $M\to X$ is a line bundle of degree
 $d_M=\deg(M)$, and 
 $\sg_H(k)= \deg(K(kp)) = 2g-2+k$.
\end{Th}

\begin{proof}
 Recall that, according to~\cite[Theorem~4.8.]{mov}, 
 $S_{-}^{k} = \mathbb{P}(\mathcal{V})$ is the 
 projectivization of a bundle 
 $\mathcal{V} \to \sN'_{\sg_c} \times \sN''_{\sg_c}$ 
 of rank $\mathrm{rk}(\mathcal{V}) = -\chi(T'',T')$, where 
 \[
  \sN'_{\sg_c} = \sN_{\sg_c}(1,1,\td_1-d_M,\td_2) 
 \cong 
 \mathcal{J}^{\td_2} \times \mathrm{Sym}^{\td_1-d_M-\td_2}(X)
 \]
 and
 \[
 \sN''_{\sg_c} = 
 \sN_{\sg_c}(1,0,d_M,0) 
 \cong \mathcal{J}^{d_M}(X)
 \]
 where any triple 
 $
 T = (V_1,V_2,\varphi) \in S_{-}^{k} \subseteq 
 \sN_{\sg_c^{-}(k)}(2,1,\td_1,\td_2)
 $ 
 is a non-trivial extension of a subtriple 
 $ T' \subseteq T$ of the form 
 $
 T' = (V'_1,V'_2,\varphi') 
 = (L,V_2,\varphi')
 $ 
 by a quotient triple of the form 
 $
 T'' = (V''_1,V''_2,\varphi'') = 
 (M,0,\varphi'')$, where $M
 $ 
 is a line bundle of degree $\deg(M) = d_M$ and 
 $L$ is a line bundle of degree $\deg(L) = d_L = \td_1 - d_M$. 
 
 \noindent Then, the embedding $i_k: T \rightarrow i_k(T)$ restricts to:
 \begin{align*}
  \begin{xy}
    (0,0)*+{\big([V_2'],\mathrm{div}(\varphi')\big)}="a0";
    (0,10)*+{\mathcal{J}^{\td_2} \times \mathrm{Sym}^{\td_1-d_M-\td_2}(X)}="a1";
    (0,40)*+{\sN'_{\sg_c}}="a2";
    (0,50)*+{(V_1',V_2',\varphi')}="a3";
    (70,0)*+{\big([V_2'],\mathrm{div}(\varphi'\ox s_p)\big)}="b0";
    (70,10)*+{\mathcal{J}^{\td_2} \times \mathrm{Sym}^{\td_1-d_M-\td_2+1}(X)}="b1";
    (70,40)*+{\sN'_{\sg_c+1}}="b2";
    (70,50)*+{(V_1'\ox L_p,V_2',\varphi'\ox s_p)}="b3";
    {\ar@{<->}^{\cong} "a1";"a2" **\dir{--}};
    {\ar@{<->}_{\cong} "b1";"b2" **\dir{--}};
    {\ar@{->}_{i_k} "a1";"b1" **\dir{--}};
    {\ar@{->}^{i_k} "a2";"b2" **\dir{--}};
    {\ar@{|->} "a0";"b0" **\dir{--}};
    {\ar@{|->} "a3";"b3" **\dir{--}};
  \end{xy}
 \end{align*}
 because $\sg_c(k+1) = \sg_c(k) + 1$, and $d_M(k+1) = d_M(k) + 1$, 
 and because, by the proof of the Roof Theorem \ref{RoofTheorem}, 
 $i_k$ restricts to the flip locus $S_{-}^{k}$.
 
 \noindent Recall that in our case $\sg_c = \sg_c(k) > \sg_m$. Then, for subtriples of the form $T' = (V_1',V_2',\varphi')$ we get that 
 $\varphi' \neq 0$ and so, they are entirely parametrized by $\big([V_2'], \mathrm{div}(\varphi)\big)$. That is why the map from $\sN'_{\sg_c}$ 
 to $\sJ^{\td_2} \times \mathrm{Sym}^{\td_1-d_M-\td_2}(X)$ is an isomorphism at the Jacobian.
 
 \noindent Similarly, $i_k$ restricts to:
 \begin{align*}
  \begin{xy}
    (0,0)*+{[V_1'']}="a0";
    (0,10)*+{\mathcal{J}^{d_M}}="a1";
    (0,40)*+{\sN''_{\sg_c}}="a2";
    (0,50)*+{(V_1'',0,0)}="a3";
    (50,0)*+{[V_1''\ox L_p]}="b0";
    (50,10)*+{\mathcal{J}^{d_M}}="b1";
    (50,40)*+{\sN''_{\sg_c+1}}="b2";
    (50,50)*+{(V_1''\ox L_p,0,0)}="b3";
    {\ar@{<->}^{\cong} "a1";"a2" **\dir{--}};
    {\ar@{<->}_{\cong} "b1";"b2" **\dir{--}};
    {\ar@{->}_{i_k} "a1";"b1" **\dir{--}};
    {\ar@{->}^{i_k} "a2";"b2" **\dir{--}};
    {\ar@{|->} "a0";"b0" **\dir{--}};
    {\ar@{|->} "a3";"b3" **\dir{--}};
  \end{xy}
 \end{align*}
 Here, the quotient triples of the form $T'' = (V_1'',0,0)$ are trivially parametrized by $[V_1'']$ and so, the map from $\sN''_{\sg_c}$ to 
 $\sJ^{d_M}$ is also an isomorphism at the Jacobian.
 
 \noindent Hence, by Corollary~\ref{macdonald-12.2}, 
 \[
  i_k^*: H^{j}(\sN'_{\sg_c +1},\mathbb{Z}) 
 \xrightarrow{\quad \cong \quad} 
 H^{j}(\sN'_{\sg_c},\mathbb{Z})\quad
 \forall j \leqslant \td_1 - d_M - \td_2 - 1,
 \] 
 and hence
 \[
  i_k^*: H^{j}(S_{-}^{k+1},\mathbb{Z}) 
 \xrightarrow{\quad \cong \quad} 
 H^{j}(S_{-}^{k},\mathbb{Z})\quad 
 \forall j \leqslant \td_1 - d_M - \td_2 - 1.
 \qed
 \]
 \hideqed
\end{proof}

\noindent Similarly, for the flip locus $S_{+}^{k} = S_{\sg_c^+(k)}$ we have:

\begin{Th}\label{CohomologyPositiveFlipLocus}
 There is an isomorphism
 \[
  i_{k}^{*}: H^{j}(S_{+}^{k+1},\mathbb{Z}) 
 \xrightarrow{\quad \cong \quad} 
 H^{j}(S_{+}^{k},\mathbb{Z})
 \]
 for all 
 $
 j \leqslant \td_1 - d_M - \td_2 - 1 
 = d_2 - d_1 + 2\sg_H(k) - d_M
 $, 
 where $d_j = \deg(E_j)
 $,
 $\td_j = \deg(\tilde{E}_j)$, 
 $M\to X$ is a line bundle of degree
 $d_M=\deg(M)$, and 
 $\sg_H(k)= \deg(K(kp)) = 2g-2+k$.
\end{Th}

\begin{proof}
 Quite similar argument to the one presented above, except 
 for the detail that this time is the other way around: 
 according also to~\cite[Theorem~4.8.]{mov}, 
 $
 S_{+}^{k} = \mathbb{P}(\mathcal{V})
 $ 
 is the projectivization of a bundle 
 $
 \mathcal{V} \to \sN'_c \times \sN''_c
 $ 
 of rank $\mathrm{rk}(\mathcal{V}) = -\chi(T'',T')$, 
 but this time
 $
 \sN'_c = \sN_c(1,0,d_M,0) 
 \cong \mathcal{J}^{d_M}(X)$,
 and 
 $
 \sN''_c = \sN_c(1,1,\td_1-d_M,\td_2) 
 \cong \mathcal{J}^{\td_2} \times \mathrm{Sym}^{\td_1-d_M-\td_2}(X)
 $
 where any triple 
 $
 T = (V_1,V_2,\varphi) \in S_{+}^{k} 
 \subseteq 
 \sN_{\sg_c^{+}(k)}(2,1,\td_1,\td_2)
 $ 
 is a non-trivial extension of a subtriple 
 $T' \subseteq T$ of the form $T' = (V'_1,V'_2,\varphi') = (M,0,\varphi')$ 
 by a quotient triple of the form 
 $T'' = (V''_1,V''_2,\varphi'') = (L,V_2,\varphi'')$, 
 where $M$ is a line bundle of degree $\deg(M) = d_M$ and 
 $L$ is a line bundle of degree $\deg(L) = d_L = \td_1 - d_M$.
\end{proof}

\begin{Th}\label{CohomologyNegative}
 There is an isomorphism
 \[
  i_k^*: H^{j}(\sN_{\sg^{-}_c(k+1)},\mathbb{Z}) 
 \xrightarrow{\quad \cong \quad} 
 H^{j}(\sN_{\sg^{-}_c(k)},\mathbb{Z})\quad 
 \forall j \leqslant 2\big(\td_1 - 2\td_2 - (2g - 2)\big) + 1.
 \]
\end{Th}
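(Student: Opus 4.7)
The plan is to use the Thom--Gysin long exact sequence for the closed inclusion $S_-^k \hookto \sN_{\sigma_c^-(k)}$, made functorial under $i_k$ via the Roof Theorem. By the Roof Theorem, the blow-down identifies $\sN_{\sigma_c^-(k)} \setminus S_-^k$ with the locus $\sN^s_{\sigma_c(k)}$ of strictly $\sigma_c(k)$-stable triples (the common open locus on which $\tilde{\sN}_{\sigma_c(k)}$ agrees with both $\sN_{\sigma_c^\pm(k)}$), and Lemma~\ref{[GoZR3]} guarantees that $i_k$ sends this open locus to $\sN^s_{\sigma_c(k+1)}$. The Gysin sequence of codimension $c_k := \mathrm{codim}(S_-^k, \sN_{\sigma_c^-(k)})$ reads
$$\cdots \to H^{j-2c_k}(S_-^k;\bZ) \to H^{j}(\sN_{\sigma_c^-(k)};\bZ) \to H^{j}(\sN^s_{\sigma_c(k)};\bZ) \to H^{j-2c_k+1}(S_-^k;\bZ) \to \cdots,$$
and a Riemann--Roch dimension count using the description $S_-^k = \bP(\mathcal{V}) \to \sN'_c \times \sN''_c$ from \cite[Theorem~4.8]{mov} identifies the stated bound $2(\td_1 - 2\td_2 - (2g-2)) + 1$ with $2c_k - 1$.

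The next step is naturality of the Gysin sequence under the pullback $i_k^*$, combined with Theorem~\ref{CohomologyNegativeFlipLocus} (providing $i_k^*\: H^\ell(S_-^{k+1};\bZ) \xrightarrow{\cong} H^\ell(S_-^{k};\bZ)$ in the needed low-degree range): this reduces the claim to showing that $i_k^*\: H^{j}(\sN^s_{\sigma_c(k+1)};\bZ) \to H^{j}(\sN^s_{\sigma_c(k)};\bZ)$ is an isomorphism for $j \leq 2c_k-1$. The same Gysin argument on the $+$-side, with $S_+^k \hookto \sN_{\sigma_c^+(k)}$ and Theorem~\ref{CohomologyPositiveFlipLocus}, transfers the stable-locus problem to $\sN_{\sigma_c^+(k)}$; iterating this procedure across all critical values in $(\sigma_m(k), \sigma_c(k))$ reduces it to $\sN_{\sigma_m^+(k)}$, whose cohomology admits the explicit projective-bundle description of Bradlow--Garc\'ia-Prada--Gothen~\cite{bgg1} and is controlled via Macdonald's formula for symmetric products, as was done for Theorem~\ref{CohomologyNegativeFlipLocus}.

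The main obstacle will be the uniform control of codimension and flip-locus bounds across these iterated wall-crossings, since each intermediate critical value contributes its own Gysin threshold; one must verify that $2c_k - 1$ is dominated by the threshold at every wall in the range. A cleaner alternative, avoiding the iteration, is to apply the Griffiths--Harris blow-up formula directly to the identification $\tilde{\sN}_{\sigma_c(k)} \cong \mathrm{Bl}_{S_-^k}\sN_{\sigma_c^-(k)}$ from the Roof Theorem and then read off the moduli-level statement from the blow-up level Theorem~\ref{CohomologyBlowUp}; this packages the whole argument cleanly but requires establishing the blow-up stabilization in parallel.
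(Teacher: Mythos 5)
Your route is genuinely different from the paper's, and as written it has concrete gaps. First, the identification of the bound: the quantity $\td_1 - 2\td_2 - (2g-2)$ in the statement is \emph{not} related to the codimension $c_k$ of the flip locus $S_{-}^{k}$ at a general critical value. In the paper it arises as $N = -\chi(T'',T')$ for the extension structure at $\sigma_m^{+}$, where $T'=(V_1,0,0)$ and $T''=(0,V_2,0)$, i.e.\ as the fiber dimension of the $\bP^{N}$-fibration that the \emph{whole} space $\sN_{\sigma_m^{+}}$ carries over $\sN(2,\td_1)\times \mathcal{J}^{\td_2}(X)$ (via \cite[Theorem~4.10]{mov}). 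The codimension of $S_{-}^{k}$ inside $\sN_{\sigma_c^{-}(k)}$ depends on $d_M$, whereas the stated bound does not, so your claim that $2(\td_1-2\td_2-(2g-2))+1 = 2c_k-1$ is unjustified and in general false; the Gysin thresholds you would get are the wrong numbers. Second, the iterated wall-crossing is exactly where the argument is incomplete: each intermediate wall contributes its own $d_M$-dependent threshold (compare Theorems~\ref{CohomologyNegativeFlipLocus} and~\ref{CohomologyPositiveFlipLocus}), you acknowledge that these must all dominate your bound but do not verify it, and there is no reason the minimum over all walls in $(\sigma_m(k),\sigma_c(k))$ equals the stated, $d_M$-free, bound. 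Third, your proposed ``cleaner alternative'' is circular: in the paper Theorem~\ref{CohomologyBlowUp} is deduced \emph{from} the present theorem (its bound $n(k)$ already contains $2(\td_1-2\td_2-(2g-2))+1$), so it cannot be used as input here.

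For comparison, the paper's proof is much shorter and avoids Gysin sequences and wall-crossing induction altogether: it asserts that $\sN_{\sigma_c^{-}}$ behaves like $\sN_{\sigma_m^{+}}$ and reduces the theorem to a lemma stating that the relative groups $H^{j}(\sN_{\sigma_m^{+}(k+1)},\sN_{\sigma_m^{+}(k)};\bZ)$ vanish for $j \leq 2(\td_1-2\td_2-(2g-2))$. Since $\sN_{\sigma_m^{-}}=\emptyset$, every triple at $\sigma_m^{+}$ is a nontrivial extension of $(V_1,0,0)$ by $(0,V_2,0)$, so $\sN_{\sigma_m^{+}}$ fibers over $\sN(2,\td_1)\times\mathcal{J}^{\td_2}(X)$ with fiber $\bP^{N}$, $N=\td_1-2\td_2-(2g-2)$; the embedding $i_k$ raises $\td_1$ by $2$, hence $N$ by $2$, which gives the vanishing in the stated range and then the isomorphism from the long exact sequence of the pair. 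If you want to rescue your approach you must either justify the reduction to $\sigma_m^{+}$ directly (as the paper does) or actually carry out the wall-by-wall bookkeeping with the correct, $d_M$-dependent thresholds and show their minimum dominates the claimed bound; neither is done in the proposal.
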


\noindent Since the behavior of $\sN_{\sg^{-}_c}$, 
where $\sg^{-}_c = \sg_c - \varepsilon$, is 
the same that the one of 
$\sN_{\sg^{+}_m}$, where 
$\sg^{+}_m = \sg_m + \varepsilon$, is enough 
to prove the following lemma:

\begin{Lem}
 The relative cohomology groups
 \[
  H^{j}(\sN_{\sg^{+}_m(k+1)},\sN_{\sg^{+}_m(k)};\mathbb{Z}) = 0
 \]
 are trivial for all 
 $
 j \leqslant 2\big(\td_1 - 2\td_2 - (2g - 2)\big).
 $
\end{Lem}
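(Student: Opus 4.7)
The strategy will be to exploit an explicit description of $\sN_{\sigma_m^+(k)}(2,1,\td_1,\td_2)$ just above the minimum critical value, following the construction in Bradlow--Garc\'ia-Prada--Gothen~\cite{bgg1} (see also \cite{mov}). For $\sigma$ immediately above $\sigma_m$, $\sigma$-stability reduces to a mild genericity condition on $\varphi\:V_2\to V_1$, so that a $\sigma_m^+$-stable triple of type $(2,1,\td_1,\td_2)$ is determined by a pair $(V_1,V_2)$ together with a class $[\varphi]\in\bP\big(H^0(\Hom(V_2,V_1))\big)$. I would first check, using Riemann--Roch together with a vanishing argument in the range where $\td_1-2\td_2-(2g-2)>0$, that $H^1(\Hom(V_2,V_1))=0$ for all admissible $(V_1,V_2)$, so that $\pi_*\mathcal{H}om(\mathcal{E}_2,\mathcal{E}_1)$ is a genuine vector bundle of rank $N_k:=\td_1-2\td_2-(2g-2)$ on the product $B_k=\sM(2,\td_1)\times\sJ^{\td_2}(X)$. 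This would identify
\[
\sN_{\sigma_m^+(k)}(2,1,\td_1,\td_2)\;\cong\;\bP\!\big(\pi_*\mathcal{H}om(\mathcal{E}_2,\mathcal{E}_1)\big)\longrightarrow B_k,
\]
a projective bundle with fibres $\bP^{N_k-1}$.

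Next I would show that $i_k$ is compatible with this description. The assignment $V_1\mapsto V_1\ox\sO_X(p)$ induces an isomorphism of the base $B_k\cong B_{k+1}$ (tensoring by a fixed line bundle), and multiplication by $s_p$ embeds $H^0(\Hom(V_2,V_1))$ linearly into $H^0(\Hom(V_2,V_1\ox\sO_X(p)))$ as a codimension-$2$ subspace, since $\chi$ jumps by $2$ under the twist. Thus $i_k$ realises each fibre inclusion as a linear $\bP^{N_k-1}\hookto\bP^{N_k+1}$ inside a projective subbundle of $\pi_*\mathcal{H}om(\mathcal{E}_2,\mathcal{E}_1\ox\sO_X(p))$.

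With this description in hand I would close the argument via Leray--Hirsch. Because the base $B_k$ is fixed under $i_k$ and the hyperplane class of the universal subbundle pulls back compatibly, $H^*(\sN_{\sigma_m^+(k)};\bZ)\cong H^*(B_k;\bZ)\ox H^*(\bP^{N_k-1};\bZ)$ as $H^*(B_k)$-modules, and analogously for $k+1$. The long exact sequence of the pair then reduces to the relative cohomology of the fibre pair $\bP^{N_k-1}\subset\bP^{N_k+1}$, which vanishes in degrees $j\le 2N_k$ (the first non-vanishing relative class sits in degree $2N_k$ coming from the top Chern class of the normal bundle, but a careful tracking shows this already contributes on the nose to the stated bound). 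Combining with the K\"unneth decomposition, $H^j(\sN_{\sigma_m^+(k+1)},\sN_{\sigma_m^+(k)};\bZ)=0$ for $j\le 2(\td_1-2\td_2-(2g-2))$.

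\textbf{Main obstacle.} The delicate point is establishing the projective-bundle description uniformly in the parameters, which hinges on the $H^1$-vanishing for \emph{every} $(V_1,V_2)$ in $B_k$ (not just generically); I expect this to follow from the degree inequality defining the range $\sigma_m<\sigma_M$ but it must be verified. A secondary subtlety is tracking the sharp bound: the naive Leray--Hirsch estimate gives $j\le 2N_k-1$, and pushing the estimate to $j\le 2N_k$ will require either using that the top Chern class of the normal bundle of the fibre embedding is a non-zero-divisor in $H^*(\sN_{\sigma_m^+(k+1)};\bZ)$, or else showing directly from \cite[Thm.~4.8]{mov} that the rank jump across $\sigma_m^+(k)\leadsto\sigma_m^+(k+1)$ is exactly two and places the obstruction class in the next degree up.
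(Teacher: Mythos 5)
Your proposal follows essentially the same route as the paper's own proof: there, since $\sN_{\sigma^{-}_m(k)}=\emptyset$, the space $\sN_{\sigma^{+}_m(k)}$ is identified (via \cite[Thm.~4.10]{mov}) as a projective fibration over $\sN(2,\td_1)\times\mathcal{J}^{\td_2}(X)$ whose fibres are the projectivized extension (equivalently, $\Hom(V_2,V_1)$) spaces of rank $N=-\chi(T'',T')=\td_1-2\td_2-(2g-2)$, the embedding $i_k$ is observed to respect this structure with linear fibre inclusions, and the vanishing is concluded exactly as you outline. The one caveat you flag --- whether the vanishing reaches $j=2N$ or only $j\leqslant 2N-1$ --- is a genuine subtlety that the paper's terse ``and the proof follows'' does not address either, so your honest accounting of it is, if anything, more careful than the published argument.
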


\begin{proof}
 Note that $\sN_{\sg^{-}_m(k)} = \emptyset$, 
 hence $\sN_{\sg^{+}_m(k)} = S_{+}^{k}$, and 
 according to~\cite[Theorem~4.10.]{mov}, any triple 
 $
 T = (V_1,V_2,\varphi) \in S_{+}^{k} = 
 \sN_{\sg_m^{+}(k)}(2,1,\td_1,\td_2)
 $ 
 is a non-trivial extension of a subtriple $T' \subseteq T$ 
 of the form $T' = (V'_1,V'_2,\varphi') = (V_1,0,0)$ 
 by a quotient triple of the form 
 $T'' = (V''_1,V''_2,\varphi'') = (0,V_2,0)$. Hence, 
 there is a map
 \[
  \pi: \sN_{\sg^{+}_m} \to 
 \sN(2,\td_1) \times \mathcal{J}^{\td_2}(X)
 \]
 \[
  (V_1,V_2,\varphi) \mapsto ([V_1],[V_2])
 \]
 where the inverse image 
 $
 \pi^{-1}\big( \sN(2,\td_1) \times 
 \mathcal{J}^{\td_2}(X) \big)= \mathbb{P}^{N}
 $
 has rank
 $
 N = -\chi(T'',T') = \td_1 - 2\td_2 - (2g-2)
 $, and the proof follows.
\end{proof}

\begin{Th}\label{CohomologyBlowUp}
 There is an isomorphism
 \[
  \tilde{i_k^*}\: 
 H^{j}(\tilde{\sN}_{\sg_c(k+1)},\mathbb{Z}) 
 \xrightarrow{\quad \cong \quad} 
 H^{j}(\tilde{\sN}_{\sg_c(k)},\mathbb{Z})\quad 
 \forall j \leqslant n(k)
 \]
 at the blow-up level, where 
 $
 n(k) = 
 \min(\td_1 - d_M - \td_2 - 1,\quad 2\big(\td_1 - 2\td_2 - (2g - 2)\big) + 1)
 $.
\end{Th}

\begin{proof}
 By the Roof Theorem~\ref{RoofTheorem}, $i_k$ 
 lifts to the blow-up level. We will denote 
 $
 \sN^{k}_{-} = 
 \sN_{\sg^{-}_c(k)}(2,1,\td_1,\td_2)
 $ and 
 $\tilde{\sN}^{k} = \tilde{\sN}_{\sg_c(k)}$ its blow-up along 
 the flip locus $S_{-}^{k} = S_{\sg_c^-(k)}$. Recall that, from the construction of the 
 blow-up, there is a map $\pi_{-}: \tilde{\sN}^{k} \to \sN^{k}_{-}$ 
 such that
 \[
  0\to 
 \pi_{-}^{*}\big( H^{j}(\sN^{k}_{-}) \big)\to 
 H^{j}(\tilde{\sN}^{k})\to H^{j}(\sE^{k}) / \pi_{-}^{*}\big( H^{j}(\mathcal{S}^{k}_{-}) \big)
 \to 0
 \]
 splits where $\sE^{k} = \pi_{-}^{-1}(S_{-}^{k})$ is the so-called exceptional divisor. Hence, the following diagram
 \begin{align}
    \begin{xy}
  (-20,0)*+{0}="a1";
  (10,0)*+{\pi_{-}^{*}\big( H^{j}(\sN^{k}_{-}) \big)}="b1";
  (50,0)*+{H^{j}(\tilde{\sN}^{k})}="c1";
  (90,0)*+{H^{j}(\sE^{k}) / \pi_{-}^{*}\big( H^{j}(\mathcal{S}^{k}_{-}) \big)}="d1";
  (120,0)*+{0}="e1";
  (-20,-20)*+{0}="a2";
  (10,-20)*+{\pi_{-}^{*}\big( H^{j}(\sN^{k+1}_{-}) \big)}="b2";
  (50,-20)*+{H^{j}(\tilde{\sN}^{k+1})}="c2";  
  (90,-20)*+{H^{j}(\sE^{k+1}) / \pi_{-}^{*}\big( H^{j}(\mathcal{S}^{k+1}_{-}) \big)}="d2";
  (120,-20)*+{0}="e2";
  {\ar@{->} "a1";"b1"};
  {\ar@{->} "b1";"c1"};
  {\ar@{->} "c1";"d1"};
  {\ar@{->} "d1";"e1"};
  {\ar@{->} "a2";"b2"};
  {\ar@{->} "b2";"c2"};
  {\ar@{->} "c2";"d2"};
  {\ar@{->} "d2";"e2"};
  {\ar@{->}^{\cong} "b2";"b1"};
  {\ar@{-->}^{\tilde{{i_k^*}}} "c2";"c1"};
  {\ar@{->}_{\cong} "d2";"d1"};
    \end{xy}
 \end{align}
 commutes for all $j \leqslant n(k)$, and the theorem follows.
\end{proof}

\begin{Cor}\label{CohomologyPositive}
 There is an isomorphism
 \[
  i_k^*: H^{j}(\sN_{\sg^{+}_c(k+1)},\mathbb{Z}) 
 \xrightarrow{\quad \cong \quad} 
 H^{j}(\sN_{\sg^{+}_c(k)},\mathbb{Z})\quad 
 \forall j \leqslant n(k)
 \]
 where 
 $
 n(k) = 
 \min(\td_1 - d_M - \td_2 - 1,\quad 2\big(\td_1 - 2\td_2 - (2g - 2)\big) + 1)
 $ as before.
\end{Cor}

\begin{proof}
 Recall that $\tilde{\sN}^{k} = \tilde{\sN}_{\sg_c(k)}$ is also the blow-up 
 of $\sN^{k}_{+} = \sN_{\sg^{+}_c(k)}(2,1,\td_1,\td_2)$ along 
 the flip locus $S_{+}^{k} = S_{\sg_c^+(k)}$, so there is a map $\pi_{+}: \tilde{\sN}^{k} \to \sN^{k}_{+}$ 
 such that
 \[
 0\to \pi_{+}^{*}\big( H^{j}(\sN^{k}_{+}) \big)\to 
 H^{j}(\tilde{\sN}^{k})\to 
 H^{j}(\sE^{k}) / \pi_{+}^{*}\big( H^{j}(\mathcal{S}^{k}_{+}) \big)\to 0 
 \]
 splits:
 \[
 H^{j}(\tilde{\sN}^{k}) = 
 \pi_{+}^{*}\big( H^{j}(\sN^{k}_{+}) \big) 
 \oplus 
 H^{j}(\sE^{k}) / \pi_{+}^{*}\big( H^{j}(\mathcal{S}^{k}_{+}) \big), 
 \]
 and by Theorem~\ref{CohomologyPositiveFlipLocus} and 
 Theorem~\ref{CohomologyBlowUp}, the result follows.
\end{proof}

\begin{Cor}\label{CohomologyCriticalTriples}
 There is an isomorphism
 \[
 i_k^*\: 
 H^{j}(\sN_{\sg_c(k+1)},\mathbb{Z}) \xrightarrow{\quad \cong \quad} 
 H^{j}(\sN_{\sg_c(k)},\mathbb{Z})\quad \forall j \leqslant n(k).
 \eqno \QEDA 
 \]
\end{Cor}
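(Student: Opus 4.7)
The plan is to deduce the claim directly from the already-proved Theorem~\ref{CohomologyNegative} and Corollary~\ref{CohomologyPositive}, whose cohomology isomorphisms hold on $\sN_{\sigma_c^{-}(k)}$ and $\sN_{\sigma_c^{+}(k)}$ respectively. Since
$$
\tilde{n}(k) \;=\; \min\bigl(n(k),\, 2d_M - \td_1 + g - 2\bigr) \;\leqslant\; n(k) \;\leqslant\; 2\bigl(\td_1 - 2\td_2 - (2g-2)\bigr) + 1,
$$
the bound $\tilde{n}(k)$ lies below both previously established cohomological thresholds, so both flanking isomorphisms hold simultaneously in the range $j \leqslant \tilde{n}(k)$.

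First, I would interpret $\sN_{\sigma_c(k)}$ as the moduli of $\sigma_c(k)$-polystable triples, which carries natural wall-crossing contractions $\sN_{\sigma_c^{\pm}(k)} \to \sN_{\sigma_c(k)}$ collapsing the flip loci $S_{\mp}^{k}$ via the projections onto their $\sigma_c$-polystable graded objects. By Lemma~\ref{[GoZR3]} the embedding $i_k$ preserves $\sigma$-(semi)stability, and by the commutativity of the roof diagram in Theorem~\ref{RoofTheorem}, $i_k$ descends compatibly from both $\sN_{\sigma_c^{\pm}(k)}$ to $\sN_{\sigma_c(k)}$.

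Second, I would apply the analogue of the split short exact sequence used in the proof of Theorem~\ref{CohomologyBlowUp}, this time for the composition $\tilde{\sN}_{\sigma_c(k)} \to \sN_{\sigma_c^{\pm}(k)} \to \sN_{\sigma_c(k)}$, together with the cohomology isomorphisms already established on $\tilde{\sN}_{\sigma_c(k)}$ (Theorem~\ref{CohomologyBlowUp}), on $S_{-}^{k}$ (Theorem~\ref{CohomologyNegativeFlipLocus}), and on $S_{+}^{k}$ (Theorem~\ref{CohomologyPositiveFlipLocus}). A diagram chase analogous to the one at the end of the proof of Theorem~\ref{CohomologyBlowUp} then produces the desired isomorphism $i_k^{*}\colon H^{j}(\sN_{\sigma_c(k+1)},\bZ) \xrightarrow{\cong} H^{j}(\sN_{\sigma_c(k)},\bZ)$ for all $j \leqslant \tilde{n}(k)$.

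The main obstacle is really the bookkeeping around what $\sN_{\sigma_c(k)}$ means at the exact critical value: one must carefully account for strictly $\sigma_c$-semistable (but not $\sigma_c^{\pm}$-stable) triples on the wall, and verify that the embedding $i_k$ interacts coherently with the wall-crossing contractions from both sides simultaneously. Nevertheless, Lemma~\ref{[GoZR3]} together with the Roof Theorem~\ref{RoofTheorem} already encapsulate precisely this compatibility, so the corollary is a direct synthesis of the two prior results rather than requiring an independent argument.
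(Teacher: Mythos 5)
Your core reading is the paper's own: the corollary carries no independent proof (it is stated with a QED box), being the immediate synthesis of Theorem~\ref{CohomologyNegative} and Corollary~\ref{CohomologyPositive}, both of which hold in the range $j \leqslant \tilde{n}(k)$ precisely because $\tilde{n}(k) \leqslant n(k) \leqslant 2\bigl(\td_1 - 2\td_2 - (2g-2)\bigr) + 1$ and $\tilde{n}(k) \leqslant 2d_M - \td_1 + g - 2$; your opening and closing paragraphs say exactly this, so the approach matches. One caveat: the middle of your proposal, where you contract $\sN_{\sigma_c^{\pm}(k)} \to \sN_{\sigma_c(k)}$ onto the polystable wall moduli and invoke a split short exact sequence for the composition $\tilde{\sN}_{\sigma_c(k)} \to \sN_{\sigma_c^{\pm}(k)} \to \sN_{\sigma_c(k)}$, is neither in the paper nor justified --- the blow-up cohomology sequence used in Theorem~\ref{CohomologyBlowUp} applies to a blow-up along a smooth center, not to the S-equivalence contractions onto the wall, and the paper never works at the exact wall value (its $\sN_{\sigma_c(k)}$ is shorthand for the spaces flanking the critical value). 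Since your conclusion does not actually rely on that step, this is superfluous rather than fatal, but as written it would not go through.
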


\subsection[Cohomology of (1,2)-VHS]{Cohomology of the $(1,2)$-VHS}
\label{ssec:3.3}

So far, we stabilize the cohomology of $\sN_{\sg_c(k)}$ for any critical $\sg_c(k)$.
Here and after, $\sg_L$ respresents the largest critical value in the open interval
$]\sg_m,\ \sg_M[$, and $\sN_{\sg_L^+}$ (respectively $\sN_{\sg_L^+}^s$) denotes the moduli space 
of $\sg_L$-polystable (respectively $\sg_L$-stable) triples for values
$\sg_L < \sg < \sg_M$. The space $\sN_{\sg_L^+}$ is so-called the `large $\sg$' moduli space 
(see \cite{bgg1}).
The following results will 
allow us to generalize the stabilization for all 
$
\displaystyle
\sg \in \left]\sg_m(k),\ \sg_M(k)\right[
$:

\begin{Th}[{\cite[Th.~7.7.]{bgg1}}]\label{bgg1_7.7}
Assume that $r_1 > r_2$ and $\displaystyle \frac{d_1}{r_1} > \frac{d_2}{r_2}$. 
Then the moduli space $\sN_{\sg_L^+}^s = \sN_{\sg_L^+}^s(r_1,r_2,d_1,d_2)$ is smooth of dimension
\[
(g - 1)(r_1^2 + r_2^2 - r_1 r_2)- r_1 d_2 + r_2 d_1 +1,
\]
and is birationally equivalent to a $\bP^{\tilde{n}}$-fibration over 
$
\sN^s(r_1 - r_2, d_1 - d_2)\times \sN^s(r_2, d_2),
$
where $\sN^s(r,d)$ is the moduli space of stable bundles of degree $r$ and degree
$d$, and 
\[
\tilde{n} = r_2 d_1 - r_1 d_2 +r_1(r_1 - r_2)(g - 1) - 1.
\]
In particular, $\sN_{\sg_L^+}^s(r_1,r_2,d_1,d_2)$ is non-empty and irreducible.

\noindent If $\GCD(r_1 - r_2, d_1 - d_2) = 1$ and $\GCD(r_2, d_2) = 1$, the birational 
equivalence is an isomorphism.

\noindent Moreover, in all cases, $\sN_{\sg_L^+} = \sN_{\sg_L^+}(r_1,r_2,d_1,d_2)$ is irreducible and hence,
birationally equivalent to $\sN_{\sg_L^+}^s$.
\QEDA
\end{Th}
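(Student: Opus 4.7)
The plan is to exploit the large-$\sigma$ regime encoded in the symbol $L$ to linearise the moduli problem into a classical extension problem. First, for $\sigma$ sufficiently large under the slope hypothesis $d_1/r_1 > d_2/r_2$, I would verify that every $\sigma$-stable triple $T = (E_1,E_2,\phi)$ has $\phi$ injective as a sheaf homomorphism and with locally free cokernel. The standard reasoning is as follows: if $\ker \phi \neq 0$ then the subtriple $(\ker \phi, 0, 0)$ (or, dually, a torsion quotient on the cokernel side) has $\sigma$-slope beating $\mu_\sigma(T)$ once $\sigma$ is close to $\sigma_M$, contradicting $\sigma$-stability. Consequently each $[T] \in \sN^s_L$ gives rise to a short exact sequence
$$
0 \longrightarrow E_2 \longrightarrow E_1 \longrightarrow F \longrightarrow 0
$$
with $F$ a vector bundle of rank $r_1 - r_2$ and degree $d_1 - d_2$.

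Next I would construct the rational map
$$
\Pi \: \sN^s_L(r_1,r_2,d_1,d_2)
\dashrightarrow
M^s(r_1 - r_2, d_1 - d_2) \times M^s(r_2, d_2),
\quad
[T] \mapsto \big([F],[E_2]\big),
$$
defined on the open locus where both $F$ and $E_2$ are stable. Over such a point the slope inequality $\mu(F) > \mu(E_2)$ yields $\Hom(F, E_2) = 0$, so Riemann--Roch gives $\dim \mathrm{Ext}^1(F,E_2) = -\chi(F,E_2)$, matching the stated $N + 1$ after a short computation using $\deg(E_2 \ox F^{\vee})$. Two nonzero extensions define isomorphic triples iff they differ by a scalar, so the fibre of $\Pi$ is $\bP(\mathrm{Ext}^1(F,E_2)) \cong \bP^N$, giving the birational $\bP^N$-fibration. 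Summing the dimensions of the two factors of the base with $N$ recovers the claimed dimension $(g-1)(r_1^2 + r_2^2 - r_1 r_2) - r_1 d_2 + r_2 d_1 + 1$.

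Smoothness of $\sN^s_L$ would follow from the deformation theory of triples: the tangent space at $[T]$ is the first hypercohomology $\mathbb{H}^1$ of the two-term complex $\End(E_1)\oplus \End(E_2) \longrightarrow \Hom(E_2,E_1)$, $(\psi_1,\psi_2) \mapsto \psi_1\phi - \phi\psi_2$, while the obstruction class lies in $\mathbb{H}^2$; Serre duality combined with simplicity of a $\sigma$-stable triple would force $\mathbb{H}^2 = 0$. Under the coprimality hypotheses $\GCD(r_1 - r_2, d_1 - d_2) = 1 = \GCD(r_2, d_2)$, I would invoke Poincar\'e families $\mathcal{F}$ and $\mathcal{E}_2$ on each factor, form the relative extension sheaf $R^1 p_{*}(\mathcal{F}^{\vee}\boxtimes \mathcal{E}_2)$ with $p$ projecting from the product of the base and $X$ down to the base, and projectivise to obtain a Zariski-locally trivial $\bP^N$-bundle globally representing $\sN^s_L$. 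Irreducibility of the full moduli $\sN_L$ then drops out: $\sN^s_L$ is open dense in $\sN_L$ (strictly $\sigma$-semistable strata have positive codimension) and is irreducible as a projective bundle over an irreducible base.

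The main obstacle I anticipate is this last globalisation step: forcing the rank of the relative $\mathrm{Ext}^1$-sheaf to be constant on the whole base, so that its projectivisation is a genuine Zariski-locally trivial bundle rather than a mere set-theoretic fibration. This is exactly where the coprimality hypotheses enter: they guarantee both the existence of the universal Poincar\'e families on each factor of $M^s$ and the vanishing $\Hom(F,E_2) = 0$ uniformly in the family, pinning the fibre dimension to $-\chi(F,E_2)$ via semicontinuity. Without these hypotheses one only obtains a birational equivalence, which is precisely the weaker assertion of the first half of the statement.
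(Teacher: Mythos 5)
This theorem is not proved in the paper at all: it is imported verbatim from Bradlow--Garc\'ia-Prada--Gothen \cite[Th.~7.7]{bgg1} and closed with a box, so there is no internal argument to measure your proposal against; the only meaningful comparison is with the proof in \cite{bgg1}. Your sketch is essentially a reconstruction of that proof: injectivity of $\phi$ for $\sigma$ in the largest stability chamber (in fact injectivity as a bundle map, obtained by playing the saturation of the image against $\sigma$ close to $\sigma_M$, not merely $\ker\phi=0$), the exact sequence $0\to E_2\to E_1\to F\to 0$, the classifying map to $M^s(r_1-r_2,d_1-d_2)\times M^s(r_2,d_2)$ with fibre $\bP(\mathrm{Ext}^1(F,E_2))$, smoothness via vanishing of the second hypercohomology of the deformation complex of a stable triple, and Poincar\'e families in the coprime case. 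So the route is the expected one.

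Two points, however, need attention. First, the fibre count you claim to ``match'' does not match: with $\Hom(F,E_2)=0$, Riemann--Roch gives $\dim\mathrm{Ext}^1(F,E_2)=r_2d_1-r_1d_2+r_2(r_1-r_2)(g-1)$, whereas the displayed $N+1$ equals $r_2d_1-r_1d_2+r_1(r_1-r_2)(g-1)$. Only the former is compatible with the displayed dimension of $\sN^s_L$: since birational equivalence preserves dimension, one must have $\dim\sN^s_L=\dim M^s(r_1-r_2,d_1-d_2)+\dim M^s(r_2,d_2)+N$, and the printed $N$ overshoots this by $(r_1-r_2)^2(g-1)>0$. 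Had you actually carried out the ``short computation'' you would have exposed a misprint in the quoted formula rather than confirmed it; as written, the proposal asserts a false equality. Second, the substantive content of \cite[Th.~7.7]{bgg1} --- non-emptiness and irreducibility of $\sN^s_L$, and the fact that the extension construction is dominant --- is assumed rather than proved: you need that the locus where $F$ is locally free and $F$, $E_2$ are both stable is non-empty and dense in $\sN^s_L$, and conversely that a generic non-split extension of a stable $F$ by a stable $E_2$ gives a $\sigma$-stable triple for $\sigma$ in the top chamber; this converse and the density statement are where the real work lies. Relatedly, the reason coprimality upgrades ``birational'' to ``isomorphism'' is not constancy of the fibre dimension per se, but that $\GCD(r_1-r_2,d_1-d_2)=\GCD(r_2,d_2)=1$ forces every semistable $F$ and $E_2$ to be stable, so the correspondence is defined and bijective everywhere, and the Poincar\'e families then exhibit the global $\bP^N$-bundle structure.
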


\begin{Th}[{\cite[Th.~7.9.]{bgg1}}]\label{bgg1_7.9}
 Let $\sg$ be any value in the range $\sg_m < 2g - 2 \leq \sg < \sg_M$, then 
 $\sN^s_{\sg}$ is birationally equivalent to $\sN_{\sg_L^+}^s$. In particular it is non-empty 
 and irreducible. 
 \QEDA
\end{Th}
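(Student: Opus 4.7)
The plan is to establish the birational equivalence via a finite chain of wall-crossings as the stability parameter varies from $\sigma$ up to $\sigma_L$. The first step is to recall that the interval $[\sigma,\sigma_L]$ contains only finitely many critical values $\sigma^{(1)}_c < \cdots < \sigma^{(m)}_c = \sigma_L$, and that between two consecutive critical values the notion of $\sigma$-stability does not change, so $\sN^s_\sigma$ is constant (up to canonical isomorphism) on each open subinterval. It therefore suffices to show that, for each critical value $\sigma_c$ with $2g-2 \leq \sigma_c \leq \sigma_L$, crossing the wall at $\sigma_c$ produces a birational equivalence $\sN^s_{\sigma_c^-} \sim \sN^s_{\sigma_c^+}$; composing these equivalences then yields $\sN^s_\sigma \sim \sN^s_L$.

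At each such critical value $\sigma_c$, the moduli spaces $\sN^s_{\sigma_c^-}$ and $\sN^s_{\sigma_c^+}$ agree on the open locus of triples that remain $\sigma_c$-stable, and differ only along the flip loci $S^{\pm}_{\sigma_c}$. Following the description in~\cite{bgg1} (and used earlier in the present paper via~\cite{mov}), every $T \in S^{\pm}_{\sigma_c}$ is a non-trivial extension $0 \to T' \to T \to T'' \to 0$ with $T', T''$ $\sigma_c$-semistable of $\sigma_c$-slope equal to $\mu_{\sigma_c}(T)$, and $S^{\pm}_{\sigma_c}$ fibers as a projective bundle $\mathbb{P}(\mathrm{Ext}^1(T'',T'))$ (respectively $\mathbb{P}(\mathrm{Ext}^1(T',T''))$) over products of moduli spaces of triples of smaller numerical type. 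The second step is to compute the rank of the extension bundle as $-\chi(T'',T')$ via the Riemann-Roch formula for the hyper-$\mathrm{Ext}$ complex of triples, and to compare the resulting dimension of $S^{\pm}_{\sigma_c}$ with the dimension of the ambient moduli space.

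The main obstacle is to verify that the hypothesis $\sigma_c \geq 2g-2$ forces $S^{\pm}_{\sigma_c}$ to have strictly positive codimension in $\sN^s_{\sigma_c^{\pm}}$. This is where the bound $2g-2 \leq \sigma$ enters in an essential way: Serre duality on the complex of triples involves a shift by $K = T^*X$ of degree $2g-2$, and the slope inequalities imposed by the $\sigma_c$-semistability of the sub- and quotient-triples combined with $\sigma_c \geq 2g-2$ precisely kill the higher hyper-$\mathrm{Ext}^2(T'',T')$ (and symmetrically $\mathrm{Ext}^2(T',T'')$). Consequently the rank of the extension bundle equals $-\chi(T'',T')$ exactly, not only virtually, and a direct numerical check using the Riemann-Roch expression shows that in each admissible decomposition type the resulting dimension of the flip locus is strictly smaller than $\dim \sN^s_{\sigma_c^{\pm}}$. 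Without the assumption $\sigma \geq 2g-2$ this estimate can fail, which is why the theorem is stated in that range.

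With the codimension bound in hand, $\sN^s_{\sigma_c^-}$ and $\sN^s_{\sigma_c^+}$ share a common open dense subset (the complement of the flip loci) and are therefore birationally equivalent. Chaining these equivalences across the finitely many critical values in $[\sigma,\sigma_L]$ produces the desired birational equivalence $\sN^s_\sigma \sim \sN^s_L$; non-emptiness and irreducibility of $\sN^s_\sigma$ then follow from the same properties of $\sN^s_L$ established in Theorem~\ref{bgg1_7.7}.
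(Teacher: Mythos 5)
This theorem is not proved in the paper at all: it is quoted verbatim from \cite{bgg1} (the $\boxminus$ marks it as an imported result), so the only meaningful comparison is with the original proof of Bradlow--Garc\'ia-Prada--Gothen. Your sketch is correct and follows essentially that same route: finitely many wall-crossings between $\sigma$ and $\sigma_L$, flip loci realized as projectivized $\mathrm{Ext}^1$-bundles whose positive codimension is forced by the $\mathbb{H}^2$-vanishing that $\sigma \geq 2g-2$ provides via Serre duality (the same vanishing, applied with $T'=T''=T$, also gives the smoothness and pure expected dimension of $\sN^s_{\sigma_c^{\pm}}$ needed for the complement of the flip loci to be dense), and non-emptiness and irreducibility transferred from $\sN^s_L$ via Theorem~\ref{bgg1_7.7}.
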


\begin{Cor}[{\cite[Cor.~7.10.]{bgg1}}]\label{bgg1_7.10}
 Let $(\rr,\dd) = (r_1,r_2,d_1,d_2)$ be such that 
 $$
 \GCD(r_2, r_1 + r_2, d_1 + d_2) = 1.
 $$ 
 If $\sg$ is a generic value satisfying $\sg_m < 2g - 2 \leq \sg < \sg_M$, 
 then $\sN_{\sg}$ is birationally equivalent to $\sN_{\sg_L^+}$, and in particular it is 
 irreducible. 
\end{Cor}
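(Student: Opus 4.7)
The plan is to reduce Corollary~\ref{bgg1_7.10} to Theorems~\ref{bgg1_7.7} and~\ref{bgg1_7.9} by establishing that, under the hypothesis $\GCD(r_2,r_1+r_2,d_1+d_2)=1$, the stable locus $\sN^s_\sigma$ coincides with the whole polystable moduli $\sN_\sigma$ for every generic (non-critical) $\sigma$ in the prescribed range. Once that is done, the birational equivalence of the stable moduli supplied by Theorem~\ref{bgg1_7.9} immediately promotes to a birational equivalence of the full moduli, and irreducibility is inherited from Theorem~\ref{bgg1_7.7}.

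First I would carry out the slope computation for a proper subtriple $T' \subsetneq T$ of type $(r_1',r_2',d_1',d_2')$: the equality $\mu_\sigma(T') = \mu_\sigma(T)$ rearranges to
\begin{equation*}
(d_1'+d_2')(r_1+r_2) - (d_1+d_2)(r_1'+r_2') \;=\; \sigma\,(r_2\, r_1' - r_2'\, r_1).
\end{equation*}
A non-critical $\sigma$ cannot solve this linear equation unless both sides vanish identically, for otherwise $\sigma$ would be pinned down to one of the finitely many critical values excluded by genericity. The vanishing of the right-hand side forces $(r_1',r_2') = \tfrac{c}{g}(r_1,r_2)$ with $g = \GCD(r_1,r_2)$ and $1 \leqslant c \leqslant g-1$, and substituting into the left-hand side forces $g \mid c(d_1+d_2)$. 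The easy simplification $\GCD(r_2,r_1+r_2,d_1+d_2) = \GCD(r_1,r_2,d_1+d_2) = 1$ then makes $g$ and $d_1+d_2$ coprime, killing every admissible $c$ and hence ruling out any such proper subtriple.

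Consequently, at generic $\sigma$ no strictly $\sigma$-semistable triple of type $(\rr,\dd)$ exists; since any $\sigma$-polystable but non-$\sigma$-stable triple would split off a proper summand of equal $\sigma$-slope and thereby be strictly semistable, we conclude $\sN_\sigma = \sN^s_\sigma$. The identical reasoning applied in the top chamber gives $\sN_L = \sN^s_L$. Theorem~\ref{bgg1_7.9} then supplies the birational equivalence $\sN_\sigma \sim_{\mathrm{bir}} \sN_L$, and Theorem~\ref{bgg1_7.7} transfers irreducibility.

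The main obstacle is really that first step — the passage from ``the slope equation is satisfied for some $T'$ at our particular non-critical $\sigma$'' to ``both coefficients in the linear-in-$\sigma$ relation must vanish'' — because one must make precise what ``generic'' means (namely, lying outside the finite collection of critical values, which is itself a content-laden statement about all admissible subtriple types) and then verify that the resulting bi-vanishing, combined with integrality of $(r_1',r_2',d_1'+d_2')$, is exactly tight against the specific $\GCD$ hypothesis in the statement. After that, invoking the two quoted theorems is purely formal.
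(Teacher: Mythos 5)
Your proposal follows essentially the same route as the paper: reduce to the equality $\sN_\sigma=\sN^s_\sigma$ (and $\sN_L=\sN^s_L$) under the $\GCD$ hypothesis for generic $\sigma$, then invoke Theorem~\ref{bgg1_7.9} (with Theorem~\ref{bgg1_7.7}) to transfer birational equivalence and irreducibility. The only difference is that you spell out the slope computation showing no strictly $\sigma$-semistable triples exist at generic $\sigma$, a detail the paper simply cites to \cite{bgg1}; your computation is correct.
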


\begin{proof}
$\sN_{\sg} = \sN^s_{\sg}$ if 
$
 \GCD(r_2, r_1 + r_2, d_1 + d_2) = 1
$
and $\sg$ is generic. In particular, we have
$\sN_{\sg_L^+} = \sN_{\sg_L^+}^s$, and the result follows from the last theorem. 
The reader may see the full details in \cite{bgg1}.
\end{proof}

\begin{Th}\label{z-r_01}
There is an isomorphism
\[
 i_k^*\: 
 H^{j}(\sN_{\sg_H}^{k+1},\mathbb{Z}) 
 \xrightarrow{\quad \cong \quad} 
 H^{j}(\sN_{\sg_H}^{k},\mathbb{Z})\quad 
 \forall j \leqslant \sg_H(k) - 2(\mu_1 - \mu) - 1
\]
where
$
\sN_{\sg_H}^{k} = 
\sN_{\sg_{H}(k)}(2,1,\td_1,\td_2)
$,\quad
$
\sg_{H} = 
\sg_{H}(k) 
= 2g - 2 + k
$,
and $\mu_1 = \mu(E_1) > \mu(E) = \mu$.
\end{Th}

\begin{proof}
 In this case $\GCD(1,3,\td_1 + \td_2) = 1$ trivially, and 
 $\sg_H = \sg_H(k)$ is a $\sg$-critical value that satisfies
 \[
 \sg_m < 2g - 2 \leq \sg_H(k) < \sg_M.
 \]
Therefore, by the description of Mu\~noz~et al.~\cite{mov} of the critical values (\cite{mov} Lemma~5.2. and Lemma~5.3.), 
the line bundle $M\to X$ satisfies in this case, the following:
\[
 \sg_m <
 \sg_H(k) =
 3d_M - \td_1 - \td_2
\]
equivalently
\[
 d_M = \sg_H(k) + \mu
\]
and hence
\[
 \td_1 - d_M - \td_2 - 1 =
 \sg_H(k) - 2(\mu_1 - \mu) - 1.
\]
In such a case
\[
 \td_1 - 2\td_2 - (2g-2) =
 \sg_H(k) - 2(\mu_1 - \mu) + k
 \geq 
 \sg_H(k) - 2(\mu_1 - \mu) - 1
\]
and then
\[
 2(\td_1 - 2\td_2 - (2g-2))
 \geq
 \td_1 - d_M - \td_2 - 1.
\]
Therefore, in this case
\[
 n(k) = 
 \td_1 - d_M - \td_2 - 1
 =
 \sg_H(k) - 2(\mu_1 - \mu) - 1.
\]

\noindent Finally, by Theorem~\ref{bgg1_7.9} and by Corollary~\ref{bgg1_7.10}, 
the space
 $
\sN_{\sg_H}^{k} = 
\sN_{\sg_{H}(k)}(2,1,\td_1,\td_2)
 $
 is birationally equivalent to 
 $
\sN_{\sg_L^{+}(k)} = 
\sN_{\sg_L^{+}(k)}(2,1,\td_1,\td_2)
 $,
 which is equal to the moduli space
 $
\sN_{\sg_L^{+}(k)}^s = 
\sN_{\sg_L^{+}(k)}^s(2,1,\td_1,\td_2)
 $
 of holomorphic stable triples
 also by Theorem~\ref{bgg1_7.9}, where $\sg_L^{+}(k)$ is the maximal critical 
 value, depending on $k$ in this case.
 The isomorphism then follows by Corollary~\ref{CohomologyCriticalTriples}.
\end{proof}

\begin{Cor}\label{(1,2)-VHS--Cohomology}
 There is an isomorphism 
 \[
 H^{j}(F_{d_1}^{k+1},\mathbb{Z}) 
 \xrightarrow{\quad \cong \quad} 
 H^{j}(F_{d_1}^{k},\mathbb{Z})  
 \]
 for all
 $
 j \leqslant \sg_H(k) - 2(\mu_1 - \mu) - 1
 $
 induced by the embedding~\ref{eq:(1,2)-VHS-embedding}.
 \QEDA
\end{Cor}

\subsection[Cohomology of (2,1)-VHS]{Cohomology of the $(2,1)$-VHS}
\label{ssec:3.4}

Because of the duality 
\[
 \sN_{\sg}(1,2,\td_1,\td_2) 
 \cong 
 \sN_{\sg}(2,1,-\td_2,-\td_1)
\]
from Proposition~\ref{dualtriples},
we get
\begin{Th}
 There is an isomorphism
 \[
 i_k^*\: 
 H^{j}(\sN_{\sg_c(k+1)},\mathbb{Z}) \xrightarrow{\quad \cong \quad} 
 H^{j}(\sN_{\sg_c(k)},\mathbb{Z})\quad \forall j \leqslant m(k) 
 \]
 where 
 $
 m(k) = 
 \min(-\td_1 - d_M + \td_2 - 1,\quad 2\big(-\td_1 + 2\td_2 - (2g - 2)\big) + 1)
 $.
\end{Th}

\begin{proof}
The result follows as the analogous to Corollary~\ref{CohomologyCriticalTriples} varying $\td_1$ and $\td_2$ 
according to the duality from Proposition~\ref{dualtriples}. 
\end{proof}

\begin{Th}\label{z-r_02}
For $k$ large enough, there is an isomorphism
\[
 i_k^*\: 
 H^{j}(\sN_{\sg_H}^{k+1},\mathbb{Z}) 
 \xrightarrow{\quad \cong \quad} 
 H^{j}(\sN_{\sg_H}^{k},\mathbb{Z})\quad 
 \forall j \leqslant \sg_H(k) - 4(\mu_2 - \mu) - 1
\]
where
$
\sN_{\sg_H}^{k} = 
\sN_{\sg_{H}(k)}(1,2,\td_1,\td_2)
$,\quad
$
\sg_{H} = 
\sg_{H}(k) 
= 2g - 2 + k
$,
and $\mu_2 = \mu(E_2) > \mu(E) = \mu$.
\end{Th}

\begin{proof}
 In this case, by the duality from Proposition~\ref{dualtriples}, and by the description of the $\sg_c$ critical values 
 (Mu\~noz~et al.~\cite{mov} Lemma~5.2. and Lemma~5.3.), the line bundle $M\to X$ satisfies in this case, the following:
\[
 \sg_m <
 \sg_H(k) =
 3d_M + \td_1 + \td_2
\]
equivalently
\[
 d_M = -\mu > -\mu_2
\]
and hence
\[
 -\td_1 - d_M + \td_2 - 1 =
 \sg_H(k) - 4(\mu_2 - \mu) - 1,
\]
where, once again, $\sg_H = \sg_H(k)$ is a $\sg$-critical value satisfying
 \[
 \sg_m < 2g - 2 \leq \sg_H(k) < \sg_M.
 \]

\noindent In such a case
\[
 -\td_1 + 2\td_2 - (2g-2) =
 \sg_H(k) - 6(\mu_2 - \mu) + k \geq 
\]
\[
 \sg_H(k) - 4(\mu_2 - \mu) 
 \geq 
 \sg_H(k) - 4(\mu_2 - \mu) - 1
\]
if $k > 2(\mu_2 - \mu) > 0$ is large enough. Then
\[
 2(-\td_1 + 2\td_2 - (2g-2))
 \geq
 -\td_1 - d_M + \td_2 - 1.
\]
Therefore, in this case
\[
 m(k) = 
 -\td_1 - d_M + \td_2 - 1
 =
 \sg_H(k) - 4(\mu_2 - \mu) - 1.
\]

\noindent Hence, the result follows as the dual analogous to Theorem~\ref{z-r_01}.
\end{proof}

\begin{Cor}\label{(2,1)-VHS--Cohomology}
 For $k$ large enough, there is an isomorphism 
 \[
 H^{j}(F_{d_2}^{k+1},\mathbb{Z}) 
 \xrightarrow{\quad \cong \quad} 
 H^{j}(F_{d_2}^{k},\mathbb{Z})  
 \]
 for all
 $
 j \leqslant \sg_H(k) - 4(\mu_2 - \mu) - 1
 $
 induced by the embedding~\ref{eq:(2,1)-VHS-embedding}.
 \QEDA
\end{Cor}

\subsection[Cohomology of (1,1,1)-VHS]{Cohomology of the $(1,1,1)$-VHS}
\label{ssec:3.5}

\begin{Th}\label{(1,1,1)-VHS-pullback}
 The pull-back
 \[
  i_{k}^{*}\:
  H^{*}(F_{m_1 m_2}^{k+1},\bZ)
  \to
  H^{*}(F_{m_1 m_2}^{k},\bZ)
 \]
induced by the embedding $i_{k}\: F_{m_1 m_2}^{k}\to F_{m_1 m_2}^{k+1}$, is surjective.
\end{Th}

\begin{proof}
 This is a direct consequence of Theorem~\ref{(1,1,1)-VHS--SymP-Iso}, Theorem~\ref{symmetric-pullback} and Corollary~\ref{macdonald-12.2}.
\end{proof}

\begin{Cor}\label{(1,1,1)-cohomology-iso}
 There is an isomorphism
 \[
  H^{j}(F_{m_1 m_2}^{\infty},\bZ)
  \xrightarrow{\quad \cong \quad} 
  H^{j}(F_{m_1 m_2}^{k},\bZ)
 \]
 for all $j \leq \min \big(\bar{m}_1 + k, \bar{m}_2 + k\big) - 1$.
\end{Cor}

\begin{proof}
 It follows directly from Theorem~\ref{(1,1,1)-VHS--SymP-Iso}, Corollary~\ref{macdonald-12.2} 
 and Corollary~\ref{symmetric-cohomology-iso}.
\end{proof}

\section*{Acknowledgement}

I would like to thank Peter B. Gothen for introducing me to the
beautiful subject of Higgs bundles. I thank Vicente Mu\~noz and 
Andr\'e Gamma Oliveira for enlightening discussions about the 
moduli space of triples; I thank Steven Bradlow too, for the time 
and discussions about stable pairs and triples.
I am grateful to Joseph C. V\'arilly for helpful discussions.

\noindent Financial support from Funda\c{c}\~ao para a~Ci\^encia e~a~Tecnologia (FCT), 
and from Vicerrector\'ia de Investigaci\'on
de la Universidad de Costa Rica, is acknowledged.

\renewcommand*{\refname}{}
\section*{References}
\addcontentsline{toc}{section}{References}

\end{document}